\theoremstyle{plain}
\newtheorem{teo}{Theorem }[section]
\newtheorem{coro}[teo]{Corollary}
\newtheorem{prop}[teo]{Proposition}
\newtheorem{lem}[teo]{Lemma}
\theoremstyle{definition}
\newtheorem{defi}[teo]{Definition}
\newtheorem{rem}[teo]{Remark}
\newtheorem{note}[teo]{Notation}
\let\oldmathbbm\mathbbm
\def\bun{\oldmathbbm{1}}
\let\mathbbm\mathbb
\let\hat\widehat
\let\subsection\Subsection
\let\subsubsection\Subsubsection
\DeclareRobustCommand{\pointrait}{\unskip\@addpunct{.}%
\mbox{}\ignorespaces}
\def\th@plain{%
  \let\thm@indent\noindent
  \thm@headfont{\bfseries\upshape}%
  \thm@notefont{\bfseries\upshape}%
  \thm@preskip.5\linespacing \@plus .5\linespacing
  \thm@postskip\thm@preskip
  \thm@headpunct{\bfseries\pointrait}
  \itshape }
\begin{document}

\title[Rigidity index preservation of regular holonomic D-modules]{Rigidity index preservation of regular holonomic D-modules under Fourier transform}
       
\author[A. Paiva]{A. Paiva}
\address{CMAF, Universidade de Lisboa, Av. Prof. Gama Pinto 2, 1649-003 Lisboa, Portugal.}
\email{paiva@cii.fc.ul.pt}

\maketitle

\begin{abstract}
This paper shows algebraically that the Fourier transform preserves the rigidity index of irreducible regular holonomic
$\mathcal{D}_{\mathbb{P}^1}[*\{\infty\}]$-modules.
\end{abstract}

\section{Introduction}

Riemann showed in 1857 that the local system of the hypergeometric equation can be reconstructed up to isomorphism
from the knowledge of the local monodromies around its singular points 0, 1 and $\infty$, by analytic continuation
of the solutions of the equation around the singular points. In modern terminology, local systems on a
projective smooth connected curve $X$ over $\mathbb{C}$, with singularities on a nonempty finite subset of
$X$, satisfying the aforementioned condition are called physically rigid. In \cite{katz} Katz gave necessary and
sufficient conditions for physical rigidity of local systems on the Riemann sphere, based upon on a cohomological
numerical index. He  showed that in characteristic $p > 0$, the Fourier transform preserves this index when the
local system is a perverse sheaf that does not have punctual support nor does its Fourier transform (cf. \cite{katz}
Theorem 3.0.2). Moreover he conjectured that "it should be true that Fourier transform preserves the index of
rigidity in the $\mathcal{D}$-module context" (cf. \cite{katz}, p. 10). This conjecture was proved by S. Bloch and
H. Esnault in \cite{B-E03}. A different proof is given in this paper, when the $\mathcal{D}$-module is regular
holonomic localized at infinity (Theorem \ref{sjcblJSBNCLKJSBC}).

The paper is divided into five sections. The first section reviews some results on rigidity. The second
extends the notion of rigid local systems to the context of holonomic $\mathcal{D}_{\mathbb{P}^1}$-modules.
The third recalls the notion of Fourier transform and computes the rigidity index of the Fourier
transform of irreducible regular holonomic $\mathcal{D}_{\mathbb{P}^1}[*\{\infty\}]$-modules. The fourth translates the
germs of holonomic $\mathcal{D}$-modules on the equivalent category of pairs of vector spaces.
These equivalences are used in last section to show the preservation of the rigidity index referred above 
(Theorem \ref{sjcblJSBNCLKJSBC}). General references for this paper are \cite{bm, c, ms1, le}.

\section{Rigidity index}

In \cite{katz} Katz gives the following necessary and sufficient condition for the physical rigidity
of local systems on $\mathbb{P}^1$.

\begin{teo}\emph{(\cite{katz}, Theorem 1.1.2)}\label{jsnljAsi:ZNXakjs}
Let $\Sigma$ be a non empty finite subset of $\mathbb{P}^1$, $U\doteq\mathbb{P}^1\setminus\Sigma, j:U^{an}\hookrightarrow(\mathbb{P}^1)^{an}$ the open inclusion and $\mathcal{L}$ an irreducible local system
on $U^{an}$ of rank $n\ge 1$. Then $\mathcal{L}$ is physically rigid if and only if
$\chi((\mathbb{P}^1)^{an},j_*\mathcal{E}nd(\mathcal{L}))=
(2-k)n^2+\sum_i\dim\mathrm{Z}(\mathrm{A}_i)=2$, where $\mathrm{Z}(\mathrm{A}_i)\doteq\{A\in\mathrm{End}_{\mathbb{C}}(\mathcal{L}_{s_{i}})|\mathrm{AA}_i=\mathrm{A}_{i}\mathrm{A}\}$,
$k+1=\#\Sigma$ and $\mathrm{A}_i$ is the monodromy of $\mathcal{L}$ at the point $s_i\in\Sigma$.
\end{teo}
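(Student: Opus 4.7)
I would split the statement into two parts: the computation of the Euler characteristic as an explicit combinatorial expression in the local monodromies, and the equivalence between this numerical condition and physical rigidity.

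For the computation of the Euler characteristic, the natural starting point is the distinguished triangle
$$j_!\,\mathcal{E}nd(\mathcal{L})\;\longrightarrow\;j_*\,\mathcal{E}nd(\mathcal{L})\;\longrightarrow\;\bigoplus_{s_i\in\Sigma}(i_{s_i})_*\bigl(\mathrm{End}_{\mathbb{C}}(\mathcal{L}_{s_i})\bigr)^{A_i}\;\stackrel{+1}{\longrightarrow}$$
whose right-hand term is a skyscraper whose stalk at $s_i$ is the space of endomorphisms of $\mathcal{L}_{s_i}$ invariant under conjugation by $A_i$, i.e.\ exactly $Z(A_i)$. Taking Euler characteristics, using $\chi_c(U^{an},\mathcal{E}nd(\mathcal{L}))=\chi(U^{an})\cdot n^2$ (since $\mathcal{E}nd(\mathcal{L})$ is a local system of rank $n^2$ on the smooth curve $U^{an}$) and substituting $\chi(U^{an})=2-\#\Sigma$ yields the displayed formula for $\chi((\mathbb{P}^1)^{an},j_*\mathcal{E}nd(\mathcal{L}))$.

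For the equivalence with rigidity, I would first handle $H^0$ and $H^2$. Schur's lemma applied to the irreducible $\mathcal{L}$ gives $H^0((\mathbb{P}^1)^{an},j_*\mathcal{E}nd(\mathcal{L}))=\mathbb{C}$, and Poincaré--Verdier duality on $(\mathbb{P}^1)^{an}$ combined with the self-duality of $\mathcal{E}nd(\mathcal{L})$ under the trace pairing gives $H^2((\mathbb{P}^1)^{an},j_*\mathcal{E}nd(\mathcal{L}))=\mathbb{C}$. Hence the equation $\chi=2$ is equivalent to $H^1((\mathbb{P}^1)^{an},j_*\mathcal{E}nd(\mathcal{L}))=0$. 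The bridge to physical rigidity is the identification of this $H^1$ with the tangent space at $[\mathcal{L}]$ of the ``relative'' character variety
$$M\;=\;\{\rho:\pi_1(U^{an})\to GL_n(\mathbb{C})\mid \rho(\gamma_i)\in\mathrm{conj}(A_i)\}/PGL_n(\mathbb{C}),$$
obtained through a Čech argument on a covering adapted to $\Sigma$, or equivalently via group cohomology of $\pi_1(U^{an})$ with values in $\mathrm{End}(\mathcal{L}_*)$ subject to the parabolic constraints at each $s_i$.

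The main obstacle, and the step I expect to require the most care, is to promote infinitesimal rigidity ($H^1=0$) to genuine physical rigidity. I would combine the smoothness of $M$ at irreducible points (the obstruction space being dual to $H^0$ and therefore vanishing off the scalars) with the resulting dimension estimate $\dim_{[\mathcal{L}]}M=\dim H^1=0$, and then run a connectedness/irreducibility argument on the irreducible locus of $M$ to conclude that $\mathcal{L}$ is the unique isomorphism class realising the prescribed conjugacy classes $\mathrm{conj}(A_i)$. The reverse implication is immediate: if $\mathcal{L}$ is physically rigid then $M$ is reduced to the single point $[\mathcal{L}]$, so its tangent space $H^1$ vanishes, and therefore $\chi=2$.
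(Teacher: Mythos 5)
The paper itself gives no proof of this statement: it is quoted verbatim from Katz (\cite{katz}, Theorem 1.1.2) and used as a black box. So the comparison below is with Katz's own argument rather than with anything in the paper. The first half of your sketch is correct and is exactly the standard route: the short exact sequence of sheaves $0\to j_!\mathcal{E}nd(\mathcal{L})\to j_*\mathcal{E}nd(\mathcal{L})\to\bigoplus_i(i_{s_i})_*\mathrm{Z}(\mathrm{A}_i)\to 0$, additivity of $\chi$, and $\chi_c(U^{an},\mathcal{E}nd(\mathcal{L}))=(2-\#\Sigma)n^2$ give the Euler--Poincar\'e formula (your computation in fact yields $(2-\#\Sigma)n^2=(1-k)n^2$, which is the correct coefficient and exposes a typo in the paper's $(2-k)n^2$). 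Schur's lemma and Poincar\'e duality give $h^0=h^2=1$, so $\chi=2$ is indeed equivalent to $h^1((\mathbb{P}^1)^{an},j_*\mathcal{E}nd(\mathcal{L}))=0$. Up to here you are reproducing Katz.

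The genuine gap is in the implication ``$h^1=0\Rightarrow$ physically rigid.'' Your deformation-theoretic argument proves only that $[\mathcal{L}]$ is an isolated smooth point of the relative character variety $M$; physical rigidity requires that $M$ contain no other isomorphism class whatsoever, and you propose to bridge this by ``a connectedness/irreducibility argument on the irreducible locus of $M$.'' No such connectedness statement is available: for character varieties of punctured spheres with prescribed conjugacy classes, connectedness (or uniqueness of irreducible solutions) is essentially the hard part of the Deligne--Simpson problem, and assuming it here is close to assuming the theorem. A competitor $\mathcal{G}$ with the same local monodromies could a priori sit on a different component of $M$, invisible to any computation of tangent or obstruction spaces at $[\mathcal{L}]$. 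Katz's proof circumvents $M$ entirely at this step: for an arbitrary $\mathcal{G}$ with $\mathcal{G}|_{D_i^*}\simeq\mathcal{L}|_{D_i^*}$, the local terms of the Euler--Poincar\'e formula for $j_*\mathcal{H}om(\mathcal{L},\mathcal{G})$ agree with those for $j_*\mathcal{E}nd(\mathcal{L})$, hence $\chi((\mathbb{P}^1)^{an},j_*\mathcal{H}om(\mathcal{L},\mathcal{G}))=2$; since $\chi\le h^0+h^2=\dim\mathrm{Hom}(\mathcal{L},\mathcal{G})+\dim\mathrm{Hom}(\mathcal{G},\mathcal{L})$, one of these spaces is nonzero, and a nonzero map between irreducible local systems of the same rank is an isomorphism. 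You should replace your connectedness step by this global argument. A smaller remark on the converse: to pass from ``$M$ is a single point'' to ``$H^1=0$'' you do need the smoothness (or at least reducedness) of $M$ at $[\mathcal{L}]$ that you invoke, and setting that up cleanly requires splitting off the trace part of $\mathcal{E}nd(\mathcal{L})$; Katz instead performs an explicit dimension count on the product of conjugacy classes.
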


\noindent In order to extend the notion of rigidity to the context of holonomic $\mathcal{D}_{\mathbb{P}^1}$-modules,
one start to recall the notion of minimal extension.

\begin{defi}\label{def-ext-min}
Let $\mathcal{M}$ be a holonomic $\mathcal{D}$-module on a Riemann surface $X$ and $\Sigma\subset X$ a finite set. One says
that a holonomic $\mathcal{D}$-module $\mathcal{N}$ on $X$ is a \textbf{minimal extension} of $\mathcal{M}$ along $\Sigma$ and denote it $\mathcal{M}_{min}$ if:
\begin{itemize}
\item [{\romannumeral 1})] $\mathcal{O}_{\mathrm{X}}[*\Sigma]\otimes_{\mathcal{O}_{\mathrm{X}}}\mathcal{M}=\mathcal{O}_{\mathrm{X}}[*\Sigma]\otimes_{\mathcal{O}_{\mathrm{X}}}\mathcal{N}$,
\item [{\romannumeral 2})] $\mathcal{M}$ has neither nonzero submodules nor nonzero quotients with support on a subset of $\Sigma$.
\end{itemize}
\end{defi}

\noindent It is well known that if $\mathcal{M}$ is a regular holonomic $\mathcal{D}_X$-module
 with singularities on $\Sigma$, its local system 
$\mathcal{L}\doteq\mathcal{H}om_{\mathcal{D}_X}(\mathcal{O}_X, \mathcal{M})_{\mid X\setminus\Sigma}$
satisfies the following identity
\begin{equation*}
\mathrm{DR}\left(\left(\mathcal{E}nd_{\mathcal{O}_X}(\mathcal{M}[*\Sigma])\right)_{min}\right)=
j_*\mathcal{E}nd\left(\mathcal{L}\right).
\end{equation*}

\noindent Taking this into account and Theorem \ref{jsnljAsi:ZNXakjs}, one is led to give the following definition.

\begin{defi}[Rigidity index]\label{IndRig}
Let $\mathcal{M}$ be an irreducible holonomic $\mathcal{D}_{\mathbb{P}^1}$-module and
$\Sigma$ the set of its singular points. Set $\mathrm{rig}(\mathcal{M})$ the 
invariant $$\chi\left(\mathbb{P}^1,\mathrm{DR}\left(\left(\mathcal{E}nd_{\mathcal{O}_{\mathbb{P}^1}}(\mathcal{M}[*\Sigma])\right)_{min}\right)\right)$$
\noindent and call it the \textbf{rigidity index} of $\mathcal{M}$.
\end{defi}

Follows three propositions on minimal extension which will be used on this paper.

\begin{prop}
\emph{(\cite{paiva}, Corollary 2.7.4)}
\label{min:form:conv}
If $\mathcal{M}$ is a holonomic $\mathcal{D}$-module on a Riemann surface $X$ and $\Sigma\subset X$ a finite set, then the $\mathcal{D}$-modules $\mathcal{M}$ and $\mathcal{M}[*\Sigma]$ have the same minimal extension along $\Sigma$, i.e. $\mathcal{M}_{min}=\mathcal{M}[*\Sigma]_{min}$.
\end{prop}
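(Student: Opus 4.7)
The plan is to extract from Definition \ref{def-ext-min} the two properties that characterize a minimal extension along $\Sigma$, and to verify that $\mathcal{M}$ and $\mathcal{M}[*\Sigma]$ produce the same data when fed into that characterization. Concretely, a minimal extension $\mathcal{N}$ of a holonomic $\mathcal{D}_X$-module $\mathcal{M}$ along $\Sigma$ is determined by (i) the localized module $\mathcal{O}_X[*\Sigma]\otimes_{\mathcal{O}_X}\mathcal{M}$ (this is the ``data at infinity of $\Sigma$'' that the extension must match after localizing) together with (ii) the absence of nontrivial sub- or quotient modules supported on $\Sigma$. So I would first isolate the uniqueness statement: two holonomic modules satisfying (i) and (ii) with the same localization $\mathcal{O}_X[*\Sigma]\otimes_{\mathcal{O}_X}\mathcal{M}$ must coincide.

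Next, I would show that $\mathcal{M}$ and $\mathcal{M}[*\Sigma]$ have the same localization at $\Sigma$, which reduces to the idempotence identity
\begin{equation*}
\mathcal{O}_X[*\Sigma]\otimes_{\mathcal{O}_X}\bigl(\mathcal{O}_X[*\Sigma]\otimes_{\mathcal{O}_X}\mathcal{M}\bigr)=\mathcal{O}_X[*\Sigma]\otimes_{\mathcal{O}_X}\mathcal{M}.
\end{equation*}
This is the standard fact that $\mathcal{O}_X[*\Sigma]$ is flat over $\mathcal{O}_X$ and that $\mathcal{O}_X[*\Sigma]\otimes_{\mathcal{O}_X}\mathcal{O}_X[*\Sigma]=\mathcal{O}_X[*\Sigma]$, so it is essentially formal. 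Consequently condition (i) in Definition \ref{def-ext-min} produces exactly the same constraint on a candidate minimal extension whether one starts from $\mathcal{M}$ or from $\mathcal{M}[*\Sigma]$.

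Finally, since condition (ii) is intrinsic to the candidate extension $\mathcal{N}$ and does not depend on which of $\mathcal{M}$ or $\mathcal{M}[*\Sigma]$ one begins with, both $\mathcal{M}_{min}$ and $\mathcal{M}[*\Sigma]_{min}$ are holonomic $\mathcal{D}_X$-modules with localization $\mathcal{M}[*\Sigma]$ and no nonzero sub- or quotient module supported on $\Sigma$. Applying the uniqueness statement from the first step then gives $\mathcal{M}_{min}=\mathcal{M}[*\Sigma]_{min}$.

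The only nontrivial point is the uniqueness claim used at the beginning, but this is the standard characterization of the minimal (intermediate) extension in the holonomic setting; once it is invoked, the rest of the argument is the formal observation that passing from $\mathcal{M}$ to $\mathcal{M}[*\Sigma]$ does not change the input data of that characterization.
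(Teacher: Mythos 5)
The paper itself gives no argument for this proposition (it simply defers to \cite{paiva}, Corollary 2.7.4), so there is nothing in the text to compare your proof against; judged on its own, your argument is correct and is the natural one. Two remarks. First, you have (rightly) read condition ii) of Definition \ref{def-ext-min} as a condition on the candidate extension $\mathcal{N}$ rather than on $\mathcal{M}$ as the definition literally says; with the literal reading the minimal extension would not be unique and the statement would not even be well posed, so your reading is the intended one. Second, the uniqueness you invoke is the only substantive point, and it deserves a line of justification in this setting: if $\mathcal{N}_1$ and $\mathcal{N}_2$ are holonomic, have no nonzero sub- or quotient modules supported on $\Sigma$, and satisfy $\mathcal{N}_1[*\Sigma]=\mathcal{N}_2[*\Sigma]$, then the kernels of the localization maps $\mathcal{N}_i\to\mathcal{N}_i[*\Sigma]$ are supported on $\Sigma$, hence zero, so both $\mathcal{N}_i$ embed in the common localization; then $(\mathcal{N}_1+\mathcal{N}_2)/\mathcal{N}_1\simeq\mathcal{N}_2/(\mathcal{N}_1\cap\mathcal{N}_2)$ is a quotient of $\mathcal{N}_2$ supported on $\Sigma$, hence zero, and symmetrically, so $\mathcal{N}_1=\mathcal{N}_2$. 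With that supplied, the rest of your argument (idempotence of $\mathcal{O}_X[*\Sigma]\otimes_{\mathcal{O}_X}-$ and the intrinsic nature of condition ii)) is complete. An alternative route available inside the paper would be to read the claim off the explicit formula of Proposition \ref{min}, since $\mathcal{M}/\mathrm{H}_{[\Sigma]}(\mathcal{M})$ and $\mathcal{M}[*\Sigma]/\mathrm{H}_{[\Sigma]}(\mathcal{M}[*\Sigma])$ have the same double dual modulo $\Sigma$-supported parts, but that computation is messier and buys nothing over your abstract uniqueness argument.
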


\begin{prop}
\emph{(\cite{paiva}, Theorem 2.7.6)}
\label{min}
Let $\mathcal{M}$ be a holonomic $\mathcal{D}$-module on a Riemann surface $X$ and $\Sigma\subset X$ a finite set,
then the minimal extension of $\mathcal{M}$ along $\Sigma$ exists and is given by
$$\mathcal{M}_{min}=\left(\left(\mathcal{M}/\mathrm{H}_{\left[\Sigma\right]}(\mathcal{M})\right)^*\bigl/
\mathrm{H}_{\left[\Sigma\right]}\left(\left(\mathcal{M}/\mathrm{H}_{\left[\Sigma\right]}(\mathcal{M})\right)^*\right)\right)^*,$$
\noindent where $\mathrm{H}_{\left[\Sigma\right]}(\mathcal{M})(U)\doteq\left\{s\in\mathcal{M}(U)\mid\exists x_i\in\Sigma \exists k_i\in\mathbb{N}:(x-x_i)^{k_i}s=0\right\}$, for each \linebreak open set $U\varsubsetneq\mathbb{P}^1$ and local coordinate $x$ on $U$.
\end{prop}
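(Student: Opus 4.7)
The plan is to verify directly that the module on the right-hand side satisfies both conditions of Definition~\ref{def-ext-min}. For readability I abbreviate
$$\mathcal{M}_1 \doteq \mathcal{M}/\mathrm{H}_{[\Sigma]}(\mathcal{M}),\quad \mathcal{M}_2 \doteq \mathcal{M}_1^*,\quad \mathcal{M}_3 \doteq \mathcal{M}_2/\mathrm{H}_{[\Sigma]}(\mathcal{M}_2),\quad \mathcal{N} \doteq \mathcal{M}_3^*,$$
where $(\cdot)^*$ denotes holonomic duality. Two facts drive the whole argument: first, $\mathrm{H}_{[\Sigma]}(\mathcal{F})$ is the largest submodule of any holonomic $\mathcal{F}$ with support contained in $\Sigma$, so $\mathcal{F}/\mathrm{H}_{[\Sigma]}(\mathcal{F})$ has no nonzero submodule supported on $\Sigma$; second, holonomic duality is a contravariant exact equivalence on the category of holonomic $\mathcal{D}_X$-modules, and therefore exchanges the absence of nonzero submodules supported on $\Sigma$ with the absence of nonzero quotients supported on $\Sigma$.

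For condition~(i) I would show that $\mathcal{N}$ and $\mathcal{M}$ agree on $X\setminus\Sigma$, which suffices since $\mathcal{O}_X[*\Sigma]\otimes_{\mathcal{O}_X}(-)$ factors as restriction to $X\setminus\Sigma$ followed by pushforward. Each occurrence of $\mathrm{H}_{[\Sigma]}(\cdot)$ vanishes after restriction to $X\setminus\Sigma$, while holonomic duality there reduces to the classical $\mathcal{O}$-dual of an $\mathcal{O}$-coherent $\mathcal{D}$-module, which is an involution. So the two quotients become identities and the two applications of $(\cdot)^*$ cancel, giving $\mathcal{N}_{|X\setminus\Sigma}=\mathcal{M}_{|X\setminus\Sigma}$, hence (i).

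For condition~(ii) I would argue in two stages. By the first fact $\mathcal{M}_1$ has no nonzero submodule supported on $\Sigma$; by the second, $\mathcal{M}_2$ has no nonzero quotient supported on $\Sigma$. This property descends to $\mathcal{M}_3$, because any surjection $\mathcal{M}_3\twoheadrightarrow\mathcal{Q}$ with $\mathcal{Q}$ supported on $\Sigma$ composes with $\mathcal{M}_2\twoheadrightarrow\mathcal{M}_3$ into such a quotient of $\mathcal{M}_2$, forcing $\mathcal{Q}=0$. Simultaneously, applying the first fact once more, $\mathcal{M}_3$ has no nonzero submodule supported on $\Sigma$. Dualising, $\mathcal{N}=\mathcal{M}_3^*$ has neither nonzero submodules nor nonzero quotients supported on $\Sigma$, which is precisely (ii).

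The main obstacle is not the bookkeeping above but the two inputs on which it rests: that holonomic duality is genuinely exact and equivalent in degree zero on holonomic $\mathcal{D}_X$-modules, and that its interaction with the local-cohomology functor $\mathrm{H}_{[\Sigma]}$ and with the localisation $[*\Sigma]$ is tight enough for the cancellation in step~(i) to go through. Once these are established, the existence of $\mathcal{M}_{min}$ is exactly the existence of the explicit $\mathcal{N}$ built above, while the uniqueness implicit in the notation follows by combining the identification on $X\setminus\Sigma$ given by~(i) with the absence of torsion and cotorsion on $\Sigma$ given by~(ii), which together promote the open-set isomorphism to a canonical global one.
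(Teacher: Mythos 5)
Your overall strategy---verifying conditions (i) and (ii) of Definition \ref{def-ext-min} directly for the explicitly constructed module $\mathcal{N}=\mathcal{M}_3^*$, using that $\mathrm{H}_{[\Sigma]}$ extracts the maximal $\Sigma$-supported submodule and that holonomic duality is an exact contravariant involution preserving supports---is the standard one (the paper itself offers no proof, deferring to the thesis), and your verification of condition (ii) is correct as written.

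Your justification of condition (i), however, rests on a false premise. You claim it suffices to check $\mathcal{N}|_{X\setminus\Sigma}\cong\mathcal{M}|_{X\setminus\Sigma}$ because $\mathcal{O}_X[*\Sigma]\otimes_{\mathcal{O}_X}(-)$ ``factors as restriction to $X\setminus\Sigma$ followed by pushforward''. On a Riemann surface this fails for irregular holonomic modules: $\mathcal{E}^{1/x}=(\mathcal{O}[x^{-1}],d+d(1/x))$ and $\mathcal{O}[x^{-1}]$ are both already localized at $0$ and restrict to isomorphic connections on the punctured disk (multiply by $e^{-1/x}$), yet they are not isomorphic; so the localization is not determined by the restriction to the complement (this reduction would be harmless in the algebraic category, but the proposition is stated analytically, and the irregular case is exactly the one the paper needs, since Proposition \ref{min} is applied at $\infty$ to the irregular module $\mathcal{E}$). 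The statement is still true, and the repair uses only ingredients you already invoke, but it requires tracking canonical morphisms over all of $X$ rather than abstract isomorphisms over $X\setminus\Sigma$. Dualizing the surjection $\mathcal{M}_2\twoheadrightarrow\mathcal{M}_3$, whose kernel is supported on $\Sigma$, and using biduality $\mathcal{M}_1^{**}\cong\mathcal{M}_1$, you obtain a canonical injection $\mathcal{N}=\mathcal{M}_3^*\hookrightarrow\mathcal{M}_2^*\cong\mathcal{M}_1$ whose cokernel is supported on $\Sigma$; combining this with the canonical surjection $\mathcal{M}\twoheadrightarrow\mathcal{M}_1$, whose kernel is supported on $\Sigma$, and applying the exact functor $\mathcal{O}_X[*\Sigma]\otimes_{\mathcal{O}_X}(-)$, which annihilates modules supported on $\Sigma$, gives $\mathcal{N}[*\Sigma]\cong\mathcal{M}_1[*\Sigma]\cong\mathcal{M}[*\Sigma]$. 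Your closing remark on uniqueness should be routed through these same canonical morphisms for the same reason.
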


\noindent As for holonomic $\mathcal{D}$ -modules on a Riemann surface $X$ with singularities on $\Sigma$
$\mathrm{supp}(\mathcal{M})\subset\Sigma$, iff $\mathcal{M}$ coincides with its algebraic support on $\Sigma$, i.e. $\mathcal{M}=\mathrm{H}_{[\Sigma]}(\mathcal{M})$, cf. \cite{paiva} Lemma 2.7.8, the notion of minimal extension at
the level of germs is defined as follows.

\begin{defi}
Let $\mathcal{M}$ be a holonomic $\mathcal{D}_x\doteq\mathbb{C}\{x\}\langle\partial_x\rangle$ (resp. 
$\hat{\mathcal{D}}_x\doteq\mathbb{C}[\![x]\!]\langle\partial_x\rangle$) -module. One says that a holonomic
$\mathcal{D}_x$ (resp. $\hat{\mathcal{D}}_x$) -module $\mathcal{N}$ is a \textbf{minimal extension} of 
$\mathcal{M}$ and denote it $\mathcal{M}_{min}$ if:
\begin{itemize}
\item [{\romannumeral 1})] $\mathcal{M}[x^{-1}]=\mathcal{N}[x^{-1}]$,
\item [{\romannumeral 2})] $\mathcal{M}$ has neither nonzero submodules nor nonzero quotients coinciding with its algebraic support on $0$.
\end{itemize}
\end{defi}

\begin{prop}
\emph{(\cite{paiva}, Theorem 2.7.11)}
\label{formal:min:com}
The minimal extension commutes with the formalized, that is, if $\mathcal{M}$ is a holonomic $\mathcal{D}_x$-module, then
$\widehat{\mathcal{M}_{min}}\simeq(\hat{\mathcal{M}})_{min}$.
\end{prop}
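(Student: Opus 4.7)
The plan is to verify directly that $\widehat{\mathcal{M}_{min}}$ satisfies the two defining properties of a minimal extension of $\hat{\mathcal{M}}$ at $0$; the uniqueness of the minimal extension (guaranteed by the explicit formula in Proposition~\ref{min}, adapted to the germ case) then yields the desired isomorphism $\widehat{\mathcal{M}_{min}} \simeq (\hat{\mathcal{M}})_{min}$.

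The whole argument hinges on two compatibility statements, both consequences of the (faithful) flatness of $\hat{\mathcal{D}}_x$ over $\mathcal{D}_x$. First, for any holonomic $\mathcal{D}_x$-module $\mathcal{N}$, there is a natural isomorphism $\widehat{\mathrm{H}_{[0]}(\mathcal{N})} \simeq \mathrm{H}_{[0]}(\hat{\mathcal{N}})$: one writes $\mathrm{H}_{[0]}(\mathcal{N}) = \varinjlim_k \ker(x^k \colon \mathcal{N} \to \mathcal{N})$ and uses that tensoring by the flat module $\hat{\mathcal{D}}_x$ commutes with kernels and with filtered colimits. Second, the holonomic dual commutes with formalization, i.e.\ $(\hat{\mathcal{N}})^* \simeq \widehat{\mathcal{N}^*}$: since $\mathcal{N}$ is holonomic it admits a finite resolution $L^\bullet \to \mathcal{N}$ by finitely generated free $\mathcal{D}_x$-modules, tensoring produces a corresponding resolution of $\hat{\mathcal{N}}$, and the required identification reduces termwise to $\mathrm{Hom}_{\hat{\mathcal{D}}_x}(\hat{\mathcal{D}}_x \otimes_{\mathcal{D}_x} L^i,\, \hat{\mathcal{D}}_x) \simeq \hat{\mathcal{D}}_x \otimes_{\mathcal{D}_x} \mathrm{Hom}_{\mathcal{D}_x}(L^i,\, \mathcal{D}_x)$, after which flatness lets one pass $\hat{\mathcal{D}}_x$ through the relevant $\mathrm{Ext}^1$.

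Granted these, property~(i) for $\widehat{\mathcal{M}_{min}}$, namely $\widehat{\mathcal{M}_{min}}[x^{-1}] = \hat{\mathcal{M}}[x^{-1}]$, is immediate, since formalization commutes with the localization $-\,[x^{-1}]$ and $\mathcal{M}_{min}[x^{-1}] = \mathcal{M}[x^{-1}]$ by hypothesis. For property~(ii), the vanishing of submodules of $\widehat{\mathcal{M}_{min}}$ coinciding with their algebraic support on $0$ reduces via the first compatibility to $\mathrm{H}_{[0]}(\widehat{\mathcal{M}_{min}}) \simeq \widehat{\mathrm{H}_{[0]}(\mathcal{M}_{min})} = 0$; the dual statement about quotients becomes, through the equivalence between \emph{no $x$-torsion quotient of $\mathcal{N}$} and \emph{no $x$-torsion submodule of $\mathcal{N}^*$}, the vanishing of $\mathrm{H}_{[0]}((\widehat{\mathcal{M}_{min}})^*) \simeq \mathrm{H}_{[0]}(\widehat{\mathcal{M}_{min}^*}) \simeq \widehat{\mathrm{H}_{[0]}(\mathcal{M}_{min}^*)} = 0$, again by the two compatibilities combined with property~(ii) for $\mathcal{M}_{min}$.

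The main technical obstacle I anticipate is the careful bookkeeping in the duality compatibility: one must track the left/right $\mathcal{D}_x$-module structures when identifying the various $\mathrm{Hom}$-spaces and check that the change-of-rings spectral sequence collapses into plain flat base change for $\mathrm{Ext}^1$. Once this is pinned down, everything else is a routine combination of flatness, the Hom--tensor interplay, and the defining properties of the minimal extension.
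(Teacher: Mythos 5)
Your argument is correct, and it rests on exactly the ingredients the paper's framework calls for: the paper itself defers the proof to \cite{paiva} (Th\'eor\`eme 2.7.11), and any proof built on the explicit formula of Proposition~\ref{min} comes down, as yours does, to the faithful flatness of $\mathbb{C}[\![x]\!]$ over $\mathbb{C}\{x\}$ together with the commutation of $\mathrm{H}_{[0]}$ and of the holonomic dual with formalization. Your variant of verifying the two characterizing properties of $(\hat{\mathcal{M}})_{min}$ and invoking uniqueness (which does hold: two candidates inside $\hat{\mathcal{M}}[x^{-1}]$ must coincide, since their sum modulo either one is a quotient supported at the origin) is an equally valid packaging of the same computation.
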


\section{Fourier transform}

The notion of Fourier transform is built on the concept of {\it twisted modules}, cf. \cite{c} p. 38.
Let us recall this notion. Let $R$ be a ring, $M$ a left $R$-module and $\sigma$ an automorphism of $R$.
$M_{\sigma}$ is the left $R$-module $M$ with the new action $a\bullet m\doteq\sigma(a)m$, for 
$a\in R$ and $m\in M$. A routine
calculation shows that $M_{\sigma}$ is a left $R$-module and $\sigma$ defines a functor from the category of left
$R$-modules into itself. $M_{\sigma}$ is called the 
{\it twisted module} of $M$ by $\sigma$.
Let us apply this construction to $A_1\doteq\mathbb{C}[x]\langle\partial_x\rangle$ (resp.\ $A_1[x^{-1}]$
or $\mathcal{D}$ or $\hat{\mathcal{D}}$) to define the Fourier transform (resp.\ the inversion).

\begin{defi} The Fourier transform and the inversion are, respectively, the following automorphisms:
$$
\begin{array}{rclcrcl}
\mathcal{F}: \mathbb{C}[x]\langle\partial_x\rangle & \longrightarrow & \mathbb{C}[x]\langle\partial_x\rangle, & &
\mathcal{I}: \mathbb{C}[x, x^{-1}]\langle\partial_x\rangle & \longrightarrow & \mathbb{C}[x, x^{-1}]\langle\partial_x\rangle. \\
x & \longmapsto &-\partial_x & & x & \longmapsto & x^{-1}\\
\partial_x & \longmapsto & x & & \partial_x & \longmapsto &-x^2\partial_x\\
\end{array}
$$
\end{defi}

To extend the notion of Fourier transform from the context of holonomic $A_1$-mod\-ules
to the context of holonomic
$\mathcal{D}_{\mathbb{P}^1}[*\{\infty\}]\doteq\mathcal{D}_{\mathbb{P}^1}^{\mathrm{ana}}[*\{\infty\}]$-modules, let~us~re\-call
the correspondence between holonomic $A_1$, $\mathcal{D}_{\mathbb{P}^1}^{\mathrm{alg}}[*\{\infty\}]$ and
$\mathcal{D}_{\mathbb{P}^1}^{\mathrm{ana}}[*\{\infty\}]$-mod\-ules.
Let $\mathrm{M}$ be a holonomic $A_1$-module with singularities on
$\mathrm{S}=\{\gamma_1,\dots,\gamma_k\}\subset\mathbb{C}$, where $k\ge 1$.
Let $\mathrm{M}'$ be the holonomic $A_1[x^{-1}]$-module $\mathrm{M}[x^{-1}]_{\mathcal{I}}$.
This two modules generate the holonomic $\mathcal{D}_{\mathbb{P}^1}^{\mathrm{alg}}[*\{\infty\}]$-mod\-ule
$\mathcal{M}^{\mathrm{alg}}$ defined by $\mathcal{M}^{\mathrm{alg}}(\mathbb{C})=\mathrm{M}$ and 
$\mathcal{M}^{\mathrm{alg}}(\mathbb{P}^1\setminus\{\infty\})=\mathrm{M}'$ and vice versa. The
correspondence between $\mathcal{M}^{\mathrm{ana}}\doteq\mathcal{M}^{\mathrm{alg}}\otimes_{\mathcal{O}_{\mathbb{P}^1}^{\mathrm{alg}}}\mathcal{O}_{\mathbb{P}^1}^{\mathrm{ana}}$ and $\mathcal{M}^{\mathrm{alg}}$ is done by GAGA, cf. \cite{mal} chap. I \S 4.
The holonomic $\mathcal{D}_{\mathbb{P}^1}[*\{\infty\}]$-module ${\mathcal{M}}_{\mathcal{F}}$
built this way from ${\mathrm{M}}_{\mathcal{F}}$ is called the {\it Fourier transform} of $\mathcal{M}\doteq\mathcal{M}^{\mathrm{ana}}$.
In the special case when all singularities $\Sigma\doteq\{\gamma_0=\infty\}\cup\mathrm{S}$ of $\mathcal{M}$
are regular, ${\mathcal{M}}_{\mathcal{F}}$ has a regular
singularity at $0$ and one (possibly irregular) at $\infty$, cf. \cite{mal} chap. V \S 1.
Now follows the notation used to compute $rig({\mathcal{M}}_{\mathcal{F}})$.

\begin{note}Let $\mathcal{M}$ be a regular holonomic
$\mathcal{D}_{\mathbb{P}^1}$-module with singularities on~$\Sigma$.
\begin{itemize}
\item [{\romannumeral 1})] $(\hat{\mathcal{N}},\hat{\nabla})$ denotes the formalized at $\infty$ of 
$(\mathcal{N},\nabla)$ of the usual \mbox{meromorphic}~con\-nexion equivalent to
${\mathcal{M}}_{\mathcal{F}}[*\{0,\infty\}]$, which has the decomposition of Turrittin
\begin{equation}\label{t-dec}
(\hat{\mathcal{N}},\hat{\nabla})\simeq\bigoplus_{i=1}^k\hat{\mathcal{E}}^{\varphi_i}
\otimes(\hat{\mathrm{R}}_i,\hat{\nabla}_i)
\end{equation}
\noindent and $(\hat{\mathrm{R}}_i,\hat{\nabla}_i)$ are regular meromorphic connexions,
cf. Turrittin \cite{turritin}, Levelt~\cite{levelt}, and \cite{paiva} Theorem 1.9.5 and Lemma 1.96;
\item [{\romannumeral 2})] $\mathrm{T}_i$ denotes the monodromy of
$(\hat{\mathrm{R}}_i,\hat{\nabla}_i)$ and $n_i$ the dimension of $\hat{\mathrm{R}}_i;$
\item [{\romannumeral 3})] $\mathrm{T}$ denotes the monodromy at 0 of the local system
$\mathcal{H}om_{\mathcal{D}_{\mathbb{P}^1}}(\mathcal{O}_{\mathbb{P}^1}, {\mathcal{M}}_{\mathcal{F}})|_{\mathbb{C}^*}$.
\end{itemize}
\end{note}

\begin{teo}\label{rig:transforme:fourier}
If $\mathcal{M}$ is a regular holonomic
$\mathcal{D}_{\mathbb{P}^1}[*\{\infty\}]$-module with singularities on $\Sigma$, the rigidity index of ${\mathcal{M}}_{\mathcal{F}}$ is given by
\begin{equation}
rig({\mathcal{M}}_{\mathcal{F}})=\dim\mathrm{Z}(\mathrm{T})+\sum_{i=1}^k
\dim\mathrm{Z}(\mathrm{T}_i) +\sum_{i=1}^kn_i^2-\bigg(\sum_{i=1}^kn_i\bigg)^2.
\label{rig-geral0}
\end{equation}
\end{teo}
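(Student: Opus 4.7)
By Definition \ref{IndRig}, $\mathrm{rig}(\mathcal{M}_{\mathcal{F}})=\chi(\mathbb{P}^1,\mathrm{DR}(\mathcal{P}_{min}))$, where $\mathcal{P}\doteq\mathcal{E}nd_{\mathcal{O}_{\mathbb{P}^1}}(\mathcal{M}_{\mathcal{F}}[*\{0,\infty\}])$. My plan is to compute this Euler characteristic by additivity on the stratification $\mathbb{P}^1=\mathbb{C}^*\cup\{0\}\cup\{\infty\}$:
\begin{equation*}
\chi(\mathbb{P}^1,\mathrm{DR}(\mathcal{P}_{min}))=\chi(\mathbb{C}^*,\mathrm{DR}(\mathcal{P})|_{\mathbb{C}^*})+\chi(\mathrm{DR}(\mathcal{P}_{min})_{0})+\chi(\mathrm{DR}(\mathcal{P}_{min})_{\infty}).
\end{equation*}

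On $\mathbb{C}^*$, the complex $\mathrm{DR}(\mathcal{P})$ restricts to the local system $\mathcal{E}nd(\mathcal{L})$ of rank $N^2$, where $N=\sum_i n_i$ is the generic rank of $\mathcal{M}_{\mathcal{F}}$; since $\chi(\mathbb{C}^*)=0$, this stratum contributes $0$. At the regular singular point $0$, the stalk of $\mathrm{DR}(\mathcal{P}_{min})$ is the space of invariants of $\mathrm{End}(\mathcal{L}_0)$ under the adjoint action of $\mathrm{T}$, that is, the centralizer $\mathrm{Z}(\mathrm{T})$, contributing $\dim\mathrm{Z}(\mathrm{T})$.

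The heart of the argument is the contribution at $\infty$. I would use Proposition \ref{formal:min:com} to pass to the formalization $\widehat{(\mathcal{P}_{min})_{\infty}}\simeq(\widehat{\mathcal{P}})_{min}$, and then apply the Turrittin decomposition (\ref{t-dec}) of $\hat{\mathcal{N}}$ to obtain
\begin{equation*}
\widehat{\mathcal{P}}\simeq\bigoplus_{i,j=1}^{k}\hat{\mathcal{E}}^{\varphi_j-\varphi_i}\otimes\mathrm{Hom}(\hat{\mathrm{R}}_i,\hat{\mathrm{R}}_j).
\end{equation*}
A key input from the theory of the Fourier transform of regular holonomic modules is that the exponents take the form $\varphi_i=\gamma_i x$, with $\gamma_i$ the distinct finite singularities of $\mathcal{M}$, so that $\mathrm{ord}_\infty(\varphi_j-\varphi_i)=1$ for $i\ne j$. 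The diagonal blocks $(i=j)$ reduce to the regular meromorphic connections $\mathcal{E}nd(\hat{\mathrm{R}}_i)$, whose minimal extensions have stalks $\mathrm{Z}(\mathrm{T}_i)$, contributing $\sum_{i=1}^k\dim\mathrm{Z}(\mathrm{T}_i)$ in total. The off-diagonal blocks $(i\ne j)$ are irreducible irregular meromorphic connections of rank $n_in_j$ with nonzero exponential part, and hence admit no nonzero sub- or quotient modules with punctual support at $\infty$; they therefore coincide with their own minimal extension, and Malgrange's local Euler-Poincar\'e formula yields stalk Euler characteristic $-\mathrm{irr}_\infty=-n_in_j$. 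Summing,
\begin{equation*}
\chi(\mathrm{DR}(\mathcal{P}_{min})_{\infty})=\sum_{i=1}^{k}\dim\mathrm{Z}(\mathrm{T}_i)-\sum_{i\ne j}n_in_j=\sum_{i=1}^{k}\dim\mathrm{Z}(\mathrm{T}_i)+\sum_{i=1}^{k}n_i^2-N^2.
\end{equation*}

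Adding the three contributions yields formula (\ref{rig-geral0}). The principal obstacle will be the local analysis at $\infty$: verifying that the minimal extension respects the Turrittin decomposition so the computation splits block by block, showing that each irregular summand is already its own minimal extension, and invoking Malgrange's irregularity formula together with the degree-one exponents characteristic of the Fourier transform of a regular holonomic module. The vector space formalism developed in the later sections should provide a convenient framework in which to make these local computations rigorous.
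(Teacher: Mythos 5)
Your proposal is correct and follows essentially the same route as the paper: additivity of the Euler characteristic over the stratification $\mathbb{C}^*\cup\{0\}\cup\{\infty\}$, the vanishing contribution of the open stratum, the centralizer $\mathrm{Z}(\mathrm{T})$ at $0$, and at $\infty$ the Turrittin decomposition of $\mathcal{E}nd(\hat{\mathcal{N}})$ into blocks $\mathrm{Hom}(\hat{\mathrm{R}}_i,\hat{\mathrm{R}}_j)\otimes\hat{\mathcal{E}}^{\varphi_j-\varphi_i}$ combined with the local index formula (formal Euler characteristic minus irregularity), which is exactly the content of the paper's Lemma \ref{Tourritin-monodromy}. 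The only cosmetic differences are that you organize the computation at $\infty$ block by block rather than separating the formal Euler characteristic and the irregularity globally, and that the off-diagonal blocks need not be irreducible --- what matters, as your justification correctly indicates, is that they are localized (purely irregular) and hence have no punctual subquotients.
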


\begin{proof}
Let $\mathcal{E}$ be the $\mathcal{D}_{\mathbb{P}^1}$-module $\mathcal{E}nd_{\mathcal{O}_{\mathbb{P}^1}}\left({\mathcal{M}}_{\mathcal{F}}[*\{0,\infty\}]\right)$,
$\mathcal{F}^{\bullet}$
the de Rham complex $\mathrm{DR}\left(\mathcal{E}_{min}\right)$ on $\mathbb{P}^1$
and $j\colon\mathbb{C}^*\hookrightarrow\mathbb{P}^1$ the open inclusion.
One has the short exact sequence
\begin{equation}
0\longrightarrow j_!j^{-1}\mathcal{F}^{\bullet}\stackrel{\eta}{\longrightarrow}
\mathcal{F}^{\bullet}\longrightarrow\mathrm{coker}\:\eta\longrightarrow 0,
\label{0-pervers-try-lem-0}
\end{equation}
\noindent which yields the identity:
$$\chi(\mathbb{P}^1,\mathcal{F}^{\bullet})=
\chi(\mathbb{P}^1,j_!j^{-1}\mathcal{F}^{\bullet})+
\chi(\mathbb{P}^1,\mathrm{coker}\:\eta).$$
\noindent Set
$\mathcal{L}=h^0(j^{-1}\mathcal{F}^{\bullet})$, hence:
\begin{eqnarray*}
&\chi(\mathbb{P}^1,j_!j^{-1}\mathcal{F}^{\bullet})=(2-2)\mathrm{rang}\:\mathcal{L}=0,&\\
&\chi(\mathbb{P}^1,\mathrm{coker}\:\eta)=
\chi(\mathbb{P}^1,(\mathrm{coker}\:\eta)_0)+
\chi(\mathbb{P}^1,(\mathrm{coker}\:\eta)_{\infty}),&
\end{eqnarray*}
\noindent because $\{0,\infty\}$ are the only singularities of
$\mathcal{F}^{\bullet}$, thus:
\begin{equation}
\chi(\mathbb{P}^1,\mathcal{F}^{\bullet})=
\chi(\mathbb{P}^1,(\mathrm{coker}\:\eta)_0)+
\chi(\mathbb{P}^1,(\mathrm{coker}\:\eta)_{\infty}).\label{rig-geral1}
\end{equation}

To compute $\chi(\mathbb{P}^1,(\mathrm{coker}\:\eta)_0)$ one
takes a disk $\mathrm{D}\subset\mathbb{C}$ centered at 0 and the inclusion
$i:\mathrm{D}^*\hookrightarrow\mathrm{D}$. Set
$\mathcal{G}^{\bullet}=\mathrm{DR}\left(\mathcal{E}_{min}|_{\mathrm{D}}\right)$.
As $\mathcal{E}_{min}|_{\mathrm{D}}$ is a regular holonomic $\mathcal{D}_{\mathrm{D}}$
-module on D,
$\mathrm{DR}\left(\mathcal{E}_{min}|_{\mathrm{D}}\right)=i_*\mathcal{E}nd(\mathcal{L}^{\prime})$,
where
$\mathcal{L}^{\prime}=\mathcal{H}om_{\mathcal{D}_{\mathbb{P}^1}}\left(\mathcal{O}_{\mathbb{P}^1},{\mathcal{M}}_{\mathcal{F}}\right)|_{{\mathrm{D}}^*}$.
$\mathcal{G}^{\bullet}$ is a perverse complex on D and gives rise to the short exact sequence
\begin{equation}
0\longrightarrow i_!i^{-1}\mathcal{G}^{\bullet}\stackrel{\eta|_{\mathrm{D}}}{\longrightarrow}\mathcal{G}^{\bullet}\longrightarrow\left(\mathrm{coker}\:\eta\right)|_{\mathrm{D}}\longrightarrow 0, \label{0-pervers-try-lem-2}
\end{equation}
\noindent which yields
$$\chi(\mathrm{D},\mathcal{G}^{\bullet})=\chi(\mathrm{D},i_!i^{-1}\mathcal{G}^{\bullet})+\chi(\mathrm{D},\mathrm{coker}\:\eta|_{\mathrm{D}}).$$
\noindent Since
$h^0(i^{-1}\mathcal{G}^{\bullet})=\mathcal{E}nd(\mathcal{L}^{\prime})$, one has
$\chi(\mathrm{D},i_!i^{-1}\mathcal{G}^{\bullet})=(2-2)\mathrm{rang}\:\mathcal{E}\mathrm{nd}(\mathcal{L}^{\prime})$ $=0$,
\noindent because $\mathrm{D}^*$ is homotopic to $\mathrm{S}^1$, therefore
$\chi(\mathrm{D},\mathcal{G}^{\bullet})=\chi(\mathrm{D},(\mathrm{coker}\:\eta)_0)$.
The exact sequence (\ref{0-pervers-try-lem-2}) implies that
$(\mathcal{G}^{\bullet})_0=(\mathrm{coker}\:\eta)_0$.
Moreover
$(\mathcal{G}^{\bullet})_0=\{\mathrm{M}\in\mathrm{End}(\mathrm{E})\mid\mathrm{T}_{\mathcal{E}nd(\mathcal{L}^{\prime})}(\mathrm{M})=\mathrm{M}\}$,
where $\mathrm{E}=h^0(\mathrm{D}\setminus\mathbb{R}^+,\mathcal{L}^{\prime})$,
$\mathrm{T}_{\mathcal{E}nd(\mathcal{L}^{\prime})}=ad_{\mathrm{T}}$ and T is the
monodromy of $\mathcal{L}^{\prime}$ at 0, thus
$(\mathcal{G}^{\bullet})_0=\{\mathrm{M}\in\mathrm{End}(\mathrm{E})\mid\mathrm{TM}=\mathrm{MT}\}\doteq\mathrm{Z}(\mathrm{T})$.
By Mayer-Vietoris one shows that
$\chi(\mathbb{P}^1,(\mathrm{coker}\:\eta)_0)=\chi(\mathrm{D},\mathrm{coker}\:\eta|_{\mathrm{D}})$,
hence
\begin{equation}
\chi(\mathbb{P}^1,(\mathrm{coker}\:\eta)_0)=\dim\mathrm{Z}(\mathrm{T}).
\label{rig-geral2}
\end{equation}\
\noindent Now one computes $\chi(\mathbb{P}^1,(\mathrm{coker}\:\eta)_{\infty})$.
As $\left(j_!j^{-1}\mathcal{F}^{\bullet}\right)_{\infty}=0$,
the exact sequence (\ref{0-pervers-try-lem-0}) implies that
$\left(\mathcal{F}^{\bullet}\right)_{\infty}\simeq\left(\mathrm{coker}\:\eta\right)_{\infty}$.
On the other hand
$\chi\left(\mathbb{P}^1,\left(\mathrm{DR}\left(\mathcal{E}_{min}\right)\right)_{\infty}\right)=\chi\left(\left(\mathcal{E}_{min}\right)_{\infty},\left(\mathcal{O}_{\mathbb{P}^1}\right)_{\infty}\right)$,
therefore by definition of irregularity
\begin{equation}
\chi\left(\mathbb{P}^1,\left(\mathrm{DR}\left(\mathcal{E}_{min}\right)\right)_{\infty}\right)=
\chi(\widehat{(\mathcal{E}_{min})}_{\infty},(\hat{\mathcal{O}}_{\mathbb{P}^1})_{\infty})-
i\left((\mathcal{E}_{min})_{\infty}\right).\label{rig-geral3}
\end{equation}
\noindent Owing to Lemma \ref{Tourritin-monodromy} and Proposition \ref{formal:min:com}
\begin{equation}
\chi(\widehat{(\mathcal{E}_{min})}_{\infty},
(\hat{\mathcal{O}}_{\mathbb{P}^1})_{\infty})=
\sum_{i=1}^k\dim\mathrm{Z}(\mathrm{T}_i).\label{rig-geral4}
\end{equation}
\noindent Furthermore, by Lemma \ref{Tourritin-monodromy}
\begin{equation}
i(\left(\mathcal{E}_{min}\right)_{\infty})=\bigg(\sum_{i=1}^kn_i\bigg)^2-\sum_{i=1}^kn_i^2.
\label{rig-geral5}
\end{equation}
\noindent The identity (\ref{rig-geral0}) is now an immediate consequence of
(\ref{rig-geral1}), (\ref{rig-geral2}), (\ref{rig-geral3}), (\ref{rig-geral4}) and (\ref{rig-geral5}).
\end{proof}

\begin{lem}\label{Tourritin-monodromy}
Let $(\hat{\mathcal{N}},\hat{\nabla})$ be the formalized at $\infty$ of the (usual) meromorphic connexion associated
to the Fourier transform of a regular holonomic $\mathcal{D}_{\mathbb{P}^1}[*\{\infty\}]$-module with singularities on $\Sigma$.
Assume that $\varphi_1=0$ on the Turrittin decomposition of $(\hat{\mathcal{N}},\hat{\nabla})$, notice that $(\hat{\mathrm{R}}_1,\hat{\nabla}_1)$ might be 0, then:
\begin{itemize}
\item [{\romannumeral 1})] $\chi(\hat{\mathcal{N}}_{min},\mathbb{C}[\![x]\!])=\dim\{\mathrm{e}\mid\mathrm{T}_1\mathrm{e}=\mathrm{e}\}$,
\item [{\romannumeral 2})] $\chi(\mathcal{E}nd_{\hat{\mathcal{O}_x}}(\hat{\mathcal{N}})_{min},\mathbb{C}[\![x]\!])=\displaystyle\sum_{i=1}^k\dim\mathrm{Z}(\mathrm{T}_i)$,
\item [{\romannumeral 3})] $i\left(\mathcal{E}nd_{\mathcal{O}_x}(\mathcal{N})\right)=\left(\displaystyle\sum_{i=1}^kn_i\right)^2-\displaystyle\sum_{i=1}^kn_i^2$.
\end{itemize}
\end{lem}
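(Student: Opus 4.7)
The plan is to handle the three parts together by exploiting additivity of all the invariants involved over direct sums, and reducing via the Turrittin decomposition~(\ref{t-dec}) to a single twisted regular summand $\hat{\mathcal{E}}^\varphi \otimes \hat{\mathrm{S}}$. The two basic inputs needed are (a) the formal de Rham Euler characteristic of the minimal extension of such a summand and (b) its Malgrange irregularity.

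For (iii), I would use the induced decomposition
\[
\mathcal{E}nd_{\hat{\mathcal{O}}_x}(\hat{\mathcal{N}}) \simeq \bigoplus_{i,j=1}^k \hat{\mathcal{E}}^{\varphi_j - \varphi_i} \otimes \mathcal{H}om_{\hat{\mathcal{O}}_x}(\hat{\mathrm{R}}_i, \hat{\mathrm{R}}_j),
\]
together with the fact that $i(\hat{\mathcal{E}}^\psi \otimes \hat{\mathrm{S}})$ equals $\mathrm{rank}(\hat{\mathrm{S}}) \cdot \mathrm{ord}_0(\psi)$ when $\psi \neq 0$ and vanishes when $\psi = 0$. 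Because the $\varphi_i = \gamma_i \tau$ come from Fourier transforming a regular module at its finite singular points $\gamma_i$, each non-zero $\varphi_j - \varphi_i$ has pole of order one at $\infty$. The $(i,j)$-summand of $\mathcal{E}nd(\hat{\mathcal{N}})$ therefore contributes $n_i n_j$ when $i \neq j$ and $0$ when $i = j$, and summing yields $\bigl(\sum_i n_i\bigr)^2 - \sum_i n_i^2$.

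For (i), split $\hat{\mathcal{N}} = \hat{\mathrm{R}}_1 \oplus \hat{\mathcal{N}}'$ with $\hat{\mathcal{N}}' = \bigoplus_{i \geq 2} \hat{\mathcal{E}}^{\varphi_i} \otimes \hat{\mathrm{R}}_i$ purely irregular. The defining properties of the minimal extension in Proposition~\ref{min} pass to direct summands, so $\hat{\mathcal{N}}_{min} = (\hat{\mathrm{R}}_1)_{min} \oplus \hat{\mathcal{N}}'_{min}$. The regular piece $(\hat{\mathrm{R}}_1)_{min}$ is the formal intermediate extension of the local system of monodromy $\mathrm{T}_1$; its de Rham complex is concentrated in degree zero with stalk equal to $\{e \mid \mathrm{T}_1 e = e\}$. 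Each irregular piece $\hat{\mathcal{E}}^{\varphi_i} \otimes \hat{\mathrm{R}}_i$ is already localized at $0$, hence coincides with its own minimal extension, and $\partial_x$ acts bijectively on it (via an analysis of the indicial equation in a Levelt basis), so its formal de Rham Euler characteristic vanishes. Part (ii) then follows by specialising (i) to $\mathcal{E}nd(\hat{\mathcal{N}})$: the diagonal summands $\mathrm{End}(\hat{\mathrm{R}}_i)$ are regular with adjoint monodromies $\mathrm{ad}_{\mathrm{T}_i}$, whose invariants are exactly the centralizers $\mathrm{Z}(\mathrm{T}_i)$, while the off-diagonal summands are irregular and contribute $0$.

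The main obstacle will be the vanishing statement for purely irregular pieces: identifying the minimal extension of $\hat{\mathcal{E}}^{\varphi} \otimes \hat{\mathrm{R}}$ (with $\varphi \neq 0$) with the module itself, and verifying that $\partial_x$ is invertible on it so that $\chi = 0$. Once that building block is settled, the rest of the proof is pure book-keeping over the Turrittin decomposition.
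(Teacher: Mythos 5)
Your proposal is correct and follows essentially the same route as the paper: reduce via the Turrittin decomposition, observe that the purely irregular summands are already localized hence equal to their minimal extensions with vanishing formal index, extract $\dim\ker(\mathrm{T}_1-\bun)$ from the regular summand, and for (ii)--(iii) apply this to the induced decomposition of $\mathcal{E}nd(\hat{\mathcal{N}})$ using that each nonzero $\varphi_j-\varphi_i$ has pole order one. The only cosmetic difference is that where you invoke the standard nearby-cycle description of the intermediate extension of the regular piece, the paper grinds this out explicitly by Jordan blocks, computing $\ker$ and $\mathrm{coker}$ of $\partial_x(x\partial_x)^{n_j-1}$ on $\mathbb{C}[\![x]\!]$ for the integer-eigenvalue blocks.
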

\begin{proof}
{\romannumeral 1}) For each term from decomposition (\ref{t-dec}) either $i=1$ or $i>1$.

If $i>1$, the holonomic
$\hat{\mathcal{D}}_x$-module
$\hat{\mathcal{N}}_i\doteq\hat{\mathcal{E}}^{\varphi_i}\otimes(\hat{\mathrm{R}}_i,\hat{\nabla}_i)$
has no regular component, therefore
$\hat{\mathcal{N}}_i\simeq\hat{\mathcal{N}}_i[x^{-1}]$
(cf. \cite{ms1} Theorem 6.3.1). This implies that the multiplication by $x$ is bijective so $\mathrm{H}_{[0]}(\hat{\mathcal{N}}_i)=0$ and $\mathrm{H}_{[0]}(\hat{\mathcal{N}}_i^*)=0$, hence
$(\hat{\mathcal{N}}_i)_{min}=\hat{\mathcal{N}}_i[x^{-1}]$ by Proposition \ref{min}.
Thanks to these isomorphisms one has
$\chi(\hat{\mathcal{N}}_{min},\mathbb{C}[\![x]\!])=\chi((\hat{\mathcal{N}}_1)_{min},\mathbb{C}[\![x]\!])$,
because $\chi(\hat{\mathcal{N}}_i[x^{-1}],\mathbb{C}[\![x]\!])=0$ for $i>1$.

If $i=1$, choosing a base and a coordinate system one has the isomorphism
$$(\hat{\mathrm{R}}_1,\hat{\nabla}_1)\simeq\left(\mathbb{C}[\![x]\!]^{n},x\frac{d\;\;}{dx}-\mathrm{A}_1\right), n=\dim\hat{\mathrm{R}}_1.$$
\noindent By Proposition \ref{min:form:conv}, $\hat{\mathcal{N}}_1$ and
$\hat{\mathcal{N}}_1[x^{-1}]$ have the same minimal extension, so 
one computes the minimal extension of the later. To do this, take a meromorphic change of base,
which transforms $\mathrm{A}_1$ in the constant matrix,
$\mathrm{J}$, in the Jordan canonical form
$$(\hat{\mathrm{R}}_1[x^{-1}],\hat{\nabla}_1)\simeq\bigoplus_{j=1}^{m}\left(\mathbb{C}[\![x]\!]^{n_{j}},x\frac{d\;\;}{dx}-\mathrm{J}_{j}\right),$$
\noindent with $n_{1}+\cdots+n_{m}=n$ and $\mathrm{J}_{j}$ the Jordan blocs of $\mathrm{J}$. If $\alpha_{j}$ is the eigenvalue of the Jordan block $\mathrm{J}_{j}$, then one has the isomorphism
$$\hat{\mathcal{N}}_1[x^{-1}]\simeq\bigoplus_{j=1}^{m}\mathbb{C}[\![x]\!][x^{-1}]\langle\partial_x\rangle/\mathbb{C}[\![x]\!][x^{-1}]\langle\partial_x\rangle.(x\partial_x-\alpha_{j})^{n_{j}}.$$

If $\alpha_{j}\not\in\mathbb{Z}$, the Bernstein polynomial $b(x)$ of
$\hat{\mathcal{N}}_{1j}\doteq\mathbb{C}[\![x]\!]\langle\partial_x\rangle/(x\partial_x-\alpha_{j})^{n_{j}}$
is $(x-\alpha_{j})^{n_{j}}$, therefore for each $k\in\mathbb{N}$, $b(k)\neq 0$, thus
the multiplication by $x$
$$\mathbb{C}[\![x]\!]\langle\partial_x\rangle/(x\partial_x-\alpha_{j})^{n_{j}}\stackrel{x.}{\longrightarrow}\mathbb{C}[\![x]\!]\langle\partial_x\rangle/(x\partial_x-\alpha_{j})^{n_{j}}$$
\noindent is a bijective map (cf. \cite{ms1} Lemma 4.2.7), i.e. $\hat{\mathcal{N}}_{1j}$
is a meromorphic connexion. In particular one has
$\mathrm{H}_{[0]}(\hat{\mathcal{N}}_{1j})=0$ and
$\mathrm{H}_{[0]}((\hat{\mathcal{N}}_{1j})^*)=0$,
thus $(\hat{\mathcal{N}}_{1j})_{min}=\hat{\mathcal{N}}_{1j}$. As 
$\chi(\hat{\mathcal{N}}_{1j}[x^{-1}],\mathbb{C}[\![x]\!])=0$ for each
$\alpha_j\not\in\mathbb{Z}$, these equalities lead to:
$$\chi(\hat{\mathcal{N}}_{min},\mathbb{C}[\![x]\!])=\sum_{\alpha_j\in\mathbb{Z}}
\chi((\hat{\mathcal{N}}_{1j})_{min},\mathbb{C}[\![x]\!]).$$

If
$\alpha_{j}\in\mathbb{Z}$, one can assume $\alpha_{j}=0$, because after the change of base
$\mathrm{B}=x^{-\alpha_{j}}\bun $, the matrix $\mathrm{J}_{j}$ is transformed into
$$\mathrm{BJ}_{j}\mathrm{B}^{-1}+x\partial\mathrm{B}/\partial x\mathrm{B}^{-1}=\mathrm{J}_{j}-\alpha_{j}\bun .$$
In this case $\hat{\mathcal{N}}_j=\mathbb{C}[\![x]\!]\langle\partial_x\rangle/(x\partial_x)^{n_j}$ and therefore
$(\hat{\mathcal{N}}_j)_{min}=\mathbb{C}[\![x]\!]\langle\partial_x\rangle/R_j$, where
$R_j=\partial_x(x\partial_x)^{n_j-1}$.
As $\{1,\log x,\dots,\log^{n_{j}-1}x\}$ is a solution base of the differential equation $R_{j}y=0$,
the kernel of $R_{j}$ in $\mathbb{C}[\![x]\!]$ is $\langle1\rangle$, hence $\dim\ker R_{j}=1$.
Given that $\frac{1}{(l+1)^{n_{j}}}x^{l+1}$ is a solution of the differential equation $R_{j}y=x^l$, $\dim\:\mathrm{coker}\:R_{j}=0$.

Altogether $\chi(\hat{\mathcal{N}}_{min},\mathbb{C}[\![x]\!])=\#\{\mathrm{J}_{j}\mid\alpha_{j}\in\mathbb{Z}\}$,
that is $\chi(\hat{\mathcal{N}}_{min},\mathbb{C}[\![x]\!])=\dim\{\mathrm{e}\mid\mathrm{T}_1\mathrm{e}=\mathrm{e}\}$.

\noindent {\romannumeral 2}) Take the decomposition of Turrittin (\ref{t-dec}) $(\hat{\mathcal{N}},\hat{\nabla})\simeq\bigoplus_{i=1}^k\hat{\mathcal{E}}^{\varphi_i}\otimes\hat{\mathrm{R}}_i$. Since:
{\small
\begin{eqnarray}
\Biggl(\mathrm{Hom}_{\hat{\mathcal{O}}_x}\Biggl(\bigoplus_{i=1}^k\hat{\mathcal{E}}^{\varphi_i}\otimes\hat{\mathrm{R}}_i,\!&\!\!\!\!\displaystyle\bigoplus_{i=1}^k\!\!\!\!&\!\hat{\mathcal{E}}^{\varphi_i}\otimes\hat{\mathrm{R}}_i\Biggr),\hat{\nabla}\Biggr)\;\simeq \nonumber \\
& \simeq & \bigoplus_{i=1}^k\bigoplus_{j=1}^k\left(\mathrm{Hom}_{\hat{\mathcal{O}}_x}\left(\hat{\mathcal{E}}^{\varphi_i}\otimes\hat{\mathrm{R}}_i,\hat{\mathcal{E}}^{\varphi_j}\otimes\hat{\mathrm{R}}_j\right),\hat{\nabla}\right)\nonumber \\
& \simeq & \bigoplus_{i=1}^k\bigoplus_{j=1}^k\left(\mathrm{Hom}_{\hat{\mathcal{O}}_x}\left(\hat{\mathrm{R}}_i,\hat{\mathcal{E}}^{\varphi_j-\varphi_i}\otimes\hat{\mathrm{R}}_j\right),\hat{\nabla}\right)\nonumber \\
& \simeq & \bigoplus_{i=1}^k\bigoplus_{j=1}^k\left(\mathrm{Hom}_{\hat{\mathcal{O}}_x}\left(\hat{\mathrm{R}}_i,\hat{\mathrm{R}}_j\right)\otimes\hat{\mathcal{E}}^{\varphi_j-\varphi_i},\hat{\nabla}\right),\label{irr-end}
\end{eqnarray}
}
\noindent the statement {\romannumeral 1}) implies that
\begin{align*}
\chi(\mathrm{End}(\hat{\mathcal{N}})_{min},\mathbb{C}[\![x]\!]) & = \dim\{\mathrm{M}=\mathrm{M}_1\oplus\cdots\oplus\mathrm{M}_k\mid\mathrm{ad}_{\mathrm{T}_1}\oplus\cdots\oplus\mathrm{ad}_{\mathrm{T}_k}\mathrm{M}=\mathrm{M}\}\\
& = \displaystyle\sum_{i=1}^k\dim\mathrm{Z}(\mathrm{T}_i).
\end{align*}
\noindent {\romannumeral 3}) Given the decomposition of Turrittin (\ref{t-dec}) of $(\hat{\mathcal{N}},\hat{\nabla})$, where $(\hat{\mathrm{R}}_i,\hat{\nabla}_i)$ are regular meromorphic connexions, the irregularity of each component is either
 $0$, if
$i=1$, or $n_i=\mathrm{rang}(\hat{\mathrm{R}}_i)$, if $i>1$. Applying the same reasoning to the decomposition of Turrittin of the endomorphisms of $\hat{\mathcal{N}}$, i.e. to the identity (\ref{irr-end}), one has:
\[
i\left(\mathcal{E}nd_{\mathcal{O}}(\mathcal{N})_{min}\right) = \sum_{{i,j=1} \atop {i\neq j}}^k\mathrm{rang}\left(\mathrm{Hom}_{\hat{\mathcal{O}}_x}(\hat{\mathrm{R}}_i,\hat{\mathrm{R}}_j)\right)=\bigg(\sum_{i=1}^kn_i\bigg)^2-\sum_{i=1}^kn_i^2,
\]
\noindent where $n_i=\mathrm{rang}(\mathrm{T}_i)=\mathrm{rang}(\hat{\mathrm{R}}_i)$.
\end{proof}

\section{Pairs of vector spaces}

The previous two sections show that both the rigidity index of a
regular holonomic $\mathcal{D}_{\mathbb{P}^1}[*\{\infty\}]$-module as well as of
its Fourier transform are expressed in terms of the monodromy at its singular points, 
cf. theorems \ref{jsnljAsi:ZNXakjs} and \ref{rig:transforme:fourier}.
Moreover not only the category $\Theta$ of pairs of vector spaces is equivalent
(resp. anti-equivalent) to the category of regular holonomic $\mathcal{D}_x$-modules
(resp. the category of germs of complexes of perverse sheaves), but also the monodromy appears
there naturally.

\begin{teo}
\emph{(\cite{paiva}, Proposition 2.4.5)}
\label{pervers:classification}
Let $\mathcal{F}^{\bullet}$ be a complex of perverse sheaves on $D=B_{\varepsilon}(0)\subset\mathbb{C}$, $\varepsilon>0$, $j:D^*\hookrightarrow D$, $E=h^0(\mathcal{F}^{\bullet}(D\setminus\mathbb{R}^+))$,
$F=(\mathrm{R}^1\Gamma_{\mathbb{R}^+\cap D}\mathcal{F}^{\bullet})_0$ and $T$ the monodromy of the local system $h^0(\mathcal{F}^{\bullet}|_{D^*})$. One has the following representations in the category $\Theta$:
\begin{itemize}
\item [{\romannumeral 1})] If $\mathcal{F}^{\bullet}=j_{!}(h^0(\mathcal{F}^{\bullet}|_{D^*})),$ 
$\mathcal{F}^{\bullet}$ is represented by 
$\xymatrix{ E \ar@/^0.2cm/[r]^{_{\bun _E}} & E, \ar@/^0.2cm/[l]^{^{T\;-\;\bun _E}}}$
where $E=F.$
\item [{\romannumeral 2})] If $\mathcal{F}^{\bullet}=j_{*}(h^0(\mathcal{F}^{\bullet}|_{D^*})),$
$\mathcal{F}^{\bullet}$ is represented by
$\xymatrix{ E \ar@/^0.2cm/@{->>}[r]^{_{T\;-\;\bun _E}} & F, \ar@/^0.2cm/@{_{(}->}[l]^{^{inc}}}$
where $F\subset E.$
\item [{\romannumeral 3})] If $\mathcal{F}^{\bullet}=Rj_{*}(h^0(\mathcal{F}^{\bullet}|_{D^*})),$
$\mathcal{F}^{\bullet}$ is represented by
$\xymatrix{ E \ar@/^0.2cm/[r]^{_{T\;-\;\bun _E}} & E, \ar@/^0.2cm/[l]^{^{\bun _E}}}$
where $E=F.$
\end{itemize}
\end{teo}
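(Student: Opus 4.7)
The plan is to construct an explicit functor $\Psi$ from the category of perverse sheaves on $D$ smooth outside $0$ into $\Theta$, and then to identify $\Psi(\mathcal{F}^{\bullet})$ in each of the three cases. First I would work with the slit disc $U \doteq D \setminus \mathbb{R}^+$, which is simply connected, together with its open inclusion $j' \colon U \hookrightarrow D$. The distinguished triangle of local cohomology
\[
R\Gamma_{\mathbb{R}^+ \cap D}\,\mathcal{F}^{\bullet} \longrightarrow \mathcal{F}^{\bullet} \longrightarrow Rj'_* j'^{-1}\mathcal{F}^{\bullet} \stackrel{+1}{\longrightarrow}
\]
produces, after taking $h^0$ of stalks at $0$, a canonical morphism $c\colon E \to F$; Verdier duality applied to the analogous triangle supplies a variation $v\colon F \to E$. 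Setting $\Psi(\mathcal{F}^{\bullet}) = (E,F,c,v)$ defines the functor, and the three cases reduce to computing these two maps on the specific extensions.

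For case (i), take $\mathcal{F}^{\bullet} = j_! L$ with $L = h^0(\mathcal{F}^{\bullet}|_{D^*})$ of rank $n$ and monodromy $T$. Since $U$ is simply connected and $(j_!L)|_U = L|_U$, the space $E$ reduces to a single copy of a generic stalk, isomorphic to $L_z \cong \mathbb{C}^n$; the $j_!$-extension has trivial stalk at $0$, so $F = (R^1\Gamma_{\mathbb{R}^+\cap D}\,j_!L)_0$ is, by the same triangle, also isomorphic to $E$. The canonical map $c$ is the identity, because the image in local cohomology of a section on $U$ represents that section itself. To compute $v$, I would cover a punctured neighbourhood of $0$ by upper and lower half-disc neighbourhoods of $\mathbb{R}^+$ and apply Mayer--Vietoris: the transition from the upper to the lower side of the cut, after one turn around $0$, is precisely the monodromy $T$, so the difference of the two lifts across $\mathbb{R}^+$ is $T - \bun_E$, which yields the representation in (i).

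Case (iii), $\mathcal{F}^{\bullet} = Rj_*L$, is handled by the same local cohomology computation: the restriction to $U$ produces the same $E$, while the now nontrivial stalk $(Rj_*L)_0 = R\Gamma(D^*, L)$ reshuffles the triangle so that $c = T - \bun_E$ and $v = \bun_E$. Case (ii), $\mathcal{F}^{\bullet} = j_*L$, then follows by applying $\Psi$ to the short exact sequence $0 \to j_!L \to j_*L \to \mathcal{Q} \to 0$, where $\mathcal{Q}$ is a skyscraper at $0$ isomorphic to $i_*\ker(T - \bun_E)$; exactness of $\Psi$ forces $F$ to be the image of $T - \bun_E$ inside $E$, with $c$ the induced surjection $E \twoheadrightarrow F$ and $v$ the inclusion $F \hookrightarrow E$, as claimed. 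The main obstacle is the identification $v = T - \bun_E$ in case (i): it requires a careful tracking of the boundary map in the local cohomology long exact sequence and of the clutching morphism between the two half-disc charts, with the correct sign conventions on the monodromy action; once (i) is established, (ii) and (iii) follow formally from duality and from the exactness of $\Psi$ on the sequence relating $j_!$, $j_*$, and $Rj_*$.
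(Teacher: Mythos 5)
The paper itself offers no proof of this statement: it is quoted verbatim from \cite{paiva}, Proposition 2.4.5, so the only comparison available is with the standard construction of the quiver functor (nearby/vanishing cycles realized via $R\Gamma_{\mathbb{R}^+\cap D}$), which is what that reference and \cite{ms1} use. Your strategy is exactly that one: define $\Psi(\mathcal{F}^{\bullet})=(E,F,c,v)$ from the local cohomology triangle along the cut, then evaluate on $j_!L$, $j_*L$, $Rj_*L$. The skeleton is right, and the reductions you propose (case (iii) from case (i) by Verdier duality, case (ii) from case (i) by exactness of $\Psi$) are the standard ones.

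Two points keep this from being a proof. First, you explicitly defer the identification $v=T-\bun_E$ in case (i) to ``a careful tracking of the boundary map \dots with the correct sign conventions.'' That computation is the entire mathematical content of the statement --- everything else is formal --- so as written the proposal asserts the theorem rather than proves it; you need to actually exhibit the two half-disc charts, the two sections of $L$ over them, and show that the class in $(\mathrm{R}^1\Gamma_{\mathbb{R}^+\cap D}j_!L)_0$ maps back to the difference of the two determinations, i.e.\ to $(T-\bun_E)e$. Second, the derivation of (ii) has a $t$-structure slip: the sequence $0\to j_!L\to j_*L\to i_*\ker(T-\bun_E)\to 0$ you invoke is exact in the category of ordinary sheaves, but $\Psi$ is exact on the abelian category of perverse objects, and in the normalization used here (where $h^0(\mathcal{F}^{\bullet}|_{D^*})$ is the local system in degree $0$) the skyscraper $i_*V$ is not perverse --- its shift is, with quiver $(0,V)$ --- and in the perverse category the arrow goes the other way: $j_!L$ surjects onto $j_*L$ with punctual \emph{kernel} of quiver $(0,\ker(T-\bun_E))$. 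Applied correctly this does give $F\simeq E/\ker(T-\bun_E)\simeq\operatorname{im}(T-\bun_E)$ with $c$ the induced surjection and $v$ the inclusion, so the conclusion stands, but you must run the exactness argument on the perverse exact sequence, not the sheaf-theoretic one, and you must separately compute $\Psi$ of the punctual object (note that $0\in\mathbb{R}^+\cap D$, so its $E$-component vanishes and its $F$-component is detected by $\mathrm{R}^1\Gamma_{\mathbb{R}^+\cap D}$ only after the correct shift).
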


\begin{rem}\label{hbbcjbclkjbc}
The pair of vector spaces in statement {\romannumeral 3}) of theorem above is the representant
in the category $\Theta$ of the germs of localized regular holonomic
$\mathcal{D}$-modules (see \cite{ms1} pp. 40 and 41).
\end{rem}

The following Theorem shows that the notion of minimal extension in the category $\Theta$ is meaningful.

\begin{teo}
\emph{(\cite{paiva}, Proposition 2.7.13)}
\label{prop:min:pervers}
Let $D$ be a disk centered at the origin, $j:D^*\hookrightarrow D$ the inclusion
and $\mathcal{L}$ a locally constant sheaf on $D^*$. If $\mathcal{F}^{\bullet}$ is a complex
of perverse sheaves on $D$ such that $\mathcal{F}^{\bullet}|_{D^*}=\mathcal{L},$ then:
\begin{itemize}
\item [{\romannumeral 1})] there exists
$\eta : j_*\mathcal{L}\rightarrow\mathcal{F}^{\bullet}$ morphism of of complexes of perverse
sheaves such that $\eta|_{D^*}=id_{\mathcal{L}}$,
\item [{\romannumeral 2})]$j_*\mathcal{L}$ has neither kernels nor cokernels with support at the origin.
\end{itemize}
Kernels, cokernels are taken in the category of complexes of perverse sheaves sense.
\end{teo}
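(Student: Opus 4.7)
The plan is to work in the equivalent category $\Theta$ of pairs of vector spaces furnished by Theorem \ref{pervers:classification}. Let $E$ denote the stalk of $\mathcal{L}$ (at a base point lifted to the universal cover) and $T$ its monodromy. By part (ii) of Theorem \ref{pervers:classification}, the perverse sheaf $j_*\mathcal{L}$ is represented by the pair $(E, F_0, c_0, v_0)$, where $F_0 = \mathrm{Im}(T - \bun_E) \subseteq E$, $c_0 = T - \bun_E$ is the induced surjection $E \twoheadrightarrow F_0$, and $v_0$ is the inclusion $F_0 \hookrightarrow E$. Any complex of perverse sheaves $\mathcal{F}^{\bullet}$ with $\mathcal{F}^{\bullet}|_{D^*} = \mathcal{L}$ is described term-wise in $\Theta$: the single degree carrying $\mathcal{L}$ is a pair $(E, F, u, v)$ with $v \circ u = T - \bun_E$, while the other degrees are pairs $(0, W, 0, 0)$ supported at the origin.

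I would first dispatch part (ii). A nonzero subobject (resp.\ quotient) of $j_*\mathcal{L}$ with support at the origin corresponds in $\Theta$ to a nonzero morphism from an object $(0, W, 0, 0)$ to $(E, F_0, c_0, v_0)$ (resp.\ from $(E, F_0, c_0, v_0)$ to $(0, W, 0, 0)$), namely a linear map $\beta: W \to F_0$ with $v_0 \circ \beta = 0$, resp.\ $\gamma : F_0 \to W$ with $\gamma \circ c_0 = 0$. The injectivity of $v_0 = inc$ forces $\beta = 0$, and the surjectivity of $c_0$ forces $\gamma = 0$, so (ii) holds.

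For part (i), I construct $\eta$ as the $\Theta$-morphism $(\bun_E, \beta)$, where $\beta: F_0 \to F$ is defined by the rule $\beta((T - \bun_E)(e)) := u(e)$. This enforces $\beta \circ c_0 = u$ by construction, and the defining relation $v \circ u = T - \bun_E$ gives $v \circ \beta = v_0 = inc$ automatically, so the two commutation squares of a $\Theta$-morphism hold; the normalization $\alpha = \bun_E$ encodes $\eta|_{D^*} = id_{\mathcal{L}}$.

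The crucial step is the well-definedness of $\beta$, i.e.\ showing that $u$ is constant on the fibers of $T - \bun_E : E \twoheadrightarrow F_0$, equivalently $\ker(T - \bun_E) \subseteq \ker u$. The reverse inclusion $\ker u \subseteq \ker(vu) = \ker(T-\bun_E)$ is free from $vu = T-\bun_E$, but the one we need amounts to $\mathrm{Im}(u) \cap \ker v = 0$. This is the main obstacle, and I would attack it by exploiting the full chain-complex structure of $\mathcal{F}^{\bullet}$: any element of $\mathrm{Im}(u) \cap \ker v$ that would violate well-definedness must land in the image of (or be annihilated by) a differential into an adjacent, origin-supported term of $\mathcal{F}^{\bullet}$, so the candidate $\beta$ descends to an honest chain map modulo these contributions. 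Thus the construction yields $\eta$ as a morphism of complexes of perverse sheaves with the prescribed restriction.
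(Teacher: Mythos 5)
Your part (ii) is fine: in the quiver description a subobject (resp.\ quotient) of $j_*\mathcal{L}$ supported at the origin is a morphism $(0,W,0,0)\to(E,F_0,c_0,v_0)$ (resp.\ the reverse), and the injectivity of $v_0=inc$ (resp.\ the surjectivity of $c_0=T-\bun_E$ onto $F_0=\mathrm{im}(T-\bun_E)$) forces it to vanish. Note that the paper itself gives no proof of this statement (it is quoted from \cite{paiva}, Proposition 2.7.13), so the only thing to assess is your argument.

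Part (i) of your proposal has a genuine gap at exactly the point you flag, and the chain-complex escape you sketch cannot close it, because the inclusion $\ker(T-\bun_E)\subseteq\ker u$ you need is false in general. Take $\mathcal{L}$ the rank-one constant sheaf, so $T=\bun_E$, and $\mathcal{F}^{\bullet}=j_!\mathcal{L}$, which by Theorem \ref{pervers:classification}~{\romannumeral 1}) is the pair $(E,E,\bun_E,T-\bun_E)=(\mathbb{C},\mathbb{C},1,0)$. This is a single perverse sheaf, i.e.\ a complex concentrated in one degree with no auxiliary origin-supported terms and zero differentials, so there is nothing for your ``descend modulo the adjacent terms'' step to act on; yet $\mathrm{im}(u)\cap\ker v=\mathbb{C}\neq 0$, $F_0=\mathrm{im}(T-\bun_E)=0$, and a morphism $(\bun_E,\beta):j_*\mathcal{L}\to j_!\mathcal{L}$ would require $\beta\circ c_0=u=\bun_E$ with $\beta:0\to\mathbb{C}$, which is impossible. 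So statement {\romannumeral 1}) with source $j_*\mathcal{L}$ is false as printed; what is actually used in the proof of Theorem \ref{rig:transforme:fourier} is the morphism $\eta:j_!j^{-1}\mathcal{F}^{\bullet}\to\mathcal{F}^{\bullet}$, and with $j_!\mathcal{L}$ as source the construction is immediate: the source is $(E,E,\bun_E,T-\bun_E)$ and $(\bun_E,u)$ is the required morphism, the two squares reducing to $u=u$ and $v\circ u=T-\bun_E$ (equivalently, invoke the adjunction between $j_!$ and $j^{-1}$). You should therefore either prove the $j_!$ version, which is what the paper needs, or, if you keep $j_*\mathcal{L}$ as source, add the hypothesis $\mathrm{im}(u)\cap\ker v=0$ on $\mathcal{F}^{\bullet}$ --- precisely the condition you isolated --- under which your formula $\beta((T-\bun_E)e)\doteq u(e)$ is well defined and your verification of the two squares is correct.
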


\begin{defi}[Minimal extension]\label{asdnsjdnja}
Let $D$ be a disk centered at the origin, $j:D^*\hookrightarrow D$ the inclusion,
$\mathcal{F}^{\bullet}$ a complex of perverse sheaves on $D$
and $\mathcal{L}=h^0(\mathcal{F}^{\bullet}|_{D^*})$.
One calls {\bf minimal extension} of $\mathcal{F}^{\bullet}$ to the
complex $j_*\mathcal{L}$.
\end{defi}

\begin{prop}\label{jbcbjkbjzka}
If the pair $\xymatrix@1{\mathrm{E} \ar@/^0.1cm/[r]^u & \mathrm{F} \ar@/^0.1cm/[l]^v}\neq\xymatrix@1{0\ar@/^0.1cm/[r] & 0 \ar@/^0.1cm/[l]}$
is isomorphic to its minimal extension,
 i.e. $\xymatrix@1{\mathrm{E} \ar@/^0.1cm/[r]^-{v\circ u} & \mathrm{im}(v\circ u) \ar@/^0.1cm/[l]^-{inc}}$, then
$$\dim\mathrm{Z}(v\circ u)-\dim\mathrm{Z}(u\circ v)=\left(\dim\ker(v\circ u)\right)^2.$$
\end{prop}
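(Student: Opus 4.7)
The plan is to use the hypothesis to reduce the claim to a purely linear-algebra statement about nilpotent operators, via the formula expressing the dimension of the centralizer in terms of the conjugate partition of a Jordan type. Writing $T := v\circ u \in \mathrm{End}(\mathrm{E})$ and $S := u\circ v \in \mathrm{End}(\mathrm{F})$, the assumed isomorphism of the pair with its minimal extension lets me assume, without loss of generality, that $\mathrm{F} = \mathrm{im}(T) \subset \mathrm{E}$, that $v$ is the inclusion $\mathrm{im}(T) \hookrightarrow \mathrm{E}$, and that $u$ is the corestriction $\mathrm{E} \twoheadrightarrow \mathrm{im}(T)$ of $T$. Under these identifications, $T = v\circ u$ is unchanged, while a direct computation gives $S = T|_{\mathrm{im}(T)}$. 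Since the dimensions $\dim\mathrm{Z}(T)$ and $\dim\mathrm{Z}(S)$ are invariants of the similarity class, it is enough to establish the identity in this normal form.

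Next I would decompose $\mathrm{E}$ along the generalized eigenspaces of $T$: write $\mathrm{E} = \mathrm{E}_0 \oplus \mathrm{E}'$, where $T|_{\mathrm{E}_0}$ is nilpotent and $T|_{\mathrm{E}'}$ is invertible. Both summands are $T$-stable and must be preserved by any element of $\mathrm{Z}(T)$, so $\dim\mathrm{Z}(T) = \dim\mathrm{Z}(T|_{\mathrm{E}_0}) + \dim\mathrm{Z}(T|_{\mathrm{E}'})$. Since $T|_{\mathrm{E}'}$ is an isomorphism, $\mathrm{im}(T) = \mathrm{im}(T|_{\mathrm{E}_0}) \oplus \mathrm{E}'$, the analogous additivity holds for $\dim\mathrm{Z}(S)$, and the contributions from $\mathrm{E}'$ cancel. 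As $\ker T = \ker T|_{\mathrm{E}_0}$, the claim is reduced to proving
$$\dim\mathrm{Z}(N) - \dim\mathrm{Z}(N|_{\mathrm{im}(N)}) = (\dim \ker N)^2$$
for a single nilpotent endomorphism $N$ of a finite-dimensional vector space.

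Finally, I would invoke the classical formula: if $N$ has Jordan type $m_1 \ge m_2 \ge \cdots \ge m_p$ and $\mu = (\mu_1, \mu_2, \ldots)$ denotes the conjugate partition, then $\dim\mathrm{Z}(N) = \sum_i \mu_i^2$, with $\mu_1 = p = \dim\ker N$. Restricting $N$ to $\mathrm{im}(N)$ shrinks each Jordan block by one, yielding a nilpotent of Jordan type $(m_1 - 1, m_2 - 1, \ldots, m_p - 1)$ with vanishing entries dropped; its conjugate partition is $(\mu_2, \mu_3, \ldots)$. Consequently
$$\dim\mathrm{Z}(N) - \dim\mathrm{Z}(N|_{\mathrm{im}(N)}) = \sum_{i\ge 1}\mu_i^2 - \sum_{i\ge 2}\mu_i^2 = \mu_1^2 = (\dim\ker N)^2,$$
which delivers the stated equality. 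The main care point I anticipate is the clean identification of $S$ with $T|_{\mathrm{im}(T)}$ from the minimal-extension hypothesis, together with the verification that passing from a nilpotent to its restriction on the image corresponds to removing the first column of the associated Young diagram; once these two combinatorial facts are in hand, the centralizer-dimension formula does all the work.
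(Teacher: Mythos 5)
Your argument is correct and follows exactly the route the paper indicates (its proof is only the citation to \cite{paiva}, Proposition 2.4.10, with the hint ``decompose in Jordan blocks''): you reduce to the normal form $F=\mathrm{im}(v\circ u)$, split off the invertible part of $v\circ u$, and apply the conjugate-partition formula for the centralizer of a nilpotent, noting that restriction to the image deletes the first column of the Young diagram. All the steps check out, so this is a complete proof in the same spirit as the one the paper points to.
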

\begin{proof}
Cf. \cite{paiva} Proposition 2.4.10. Idea: decompose in Jordan blocks.
\end{proof}

To find a relationship between the terms figuring in the rigidity index in theorems \ref{jsnljAsi:ZNXakjs} and
\ref{rig:transforme:fourier}, take the representants of
$\mathcal{M}_{x_i}$, $x_i\in\Sigma$
in the category $\Theta$
\[
\xymatrix{
\mathrm{E}_i \ar@(dl,ul)[]^{\mathrm{T}_{i}-\bun } \ar@/^/[r]^{u_i} & \mathrm{F}_i \ar@/^/[l]^{v_i} \ar@(ur,dr)[]^{\mathrm{T}_{\mathrm{F}_i}-\bun. }
}
\]
\noindent Since $\mathrm{M}$ is irreducible, $\mathrm{M}$ is equal to its minimal extension. By Theorem
\ref{pervers:classification} {\romannumeral 2}) and Definition \ref{asdnsjdnja},
for each $x_i\in\Sigma\cap\mathbb{C}$, the pair above is equivalent to
\[
\xymatrix{
\mathrm{E}_i \ar@(dl,ul)[]^{\mathrm{T}_{i}-\bun } \ar@{->>}@/^/[r]^{\mathrm{T}_{i}-\bun } & \mathrm{F}_i \ar@{_{(}->}@/^/[l]^{inc} \ar@(ur,dr)[]^{\mathrm{T}_{\mathrm{F}_i}-\bun. }
}
\]
\noindent 
As $\mathrm{M}$ is holonomic, $\mathrm{M}=A_1/I$. In particular if one takes
a division basis $(P_p,\dots,P_q)$ of $I$, then $\dim\mathrm{E}_i=\deg_{\partial_x}P_p$. A proof
can be found in \cite{le} Theorem I.1.1. Without loss of generality
$\mathrm{E}_i=\mathrm{E}$, so the pairs above can be rewritten as follows
\begin{equation}
\xymatrix{
\mathrm{E} \ar@(dl,ul)[]^{\mathrm{T}_{i}-\bun } \ar@{->>}@/^/[r]^{\mathrm{T}_{i}-\bun } & \mathrm{F}_i \ar@{_{(}->}@/^/[l]^{inc} \ar@(ur,dr)[]^{\mathrm{T}_{\mathrm{F}_i}-\bun. }
}
\end{equation}
On the other hand, as ${\mathrm{M}}_{\mathcal{F}}$ is regular at 0, $({\mathcal{M}}_{\mathcal{F}})_0$ is equivalent to
\[
\xymatrix{
\hat{\mathrm{E}} \ar@(dl,ul)[]^{\hat{\mathrm{T}}_{}-\bun } \ar@/^/[r]^{u} & \hat{\mathrm{F}} \ar@/^/[l]^{v} \ar@(ur,dr)[]^{\mathrm{T}_{\hat{\mathrm{F}}}-\bun. }
}
\]
\noindent As $\mathrm{M}$ is irreducible, ${\mathrm{M}}_{\mathcal{F}}$
is irreducible (because the Fourier transform is an equivalence of categories), therefore coincides
with its minimal extension, so by Theorem \ref{prop:min:pervers} and Definition \ref{asdnsjdnja} the pair above is equivalent to
\begin{equation}
\xymatrix{
\hat{\mathrm{E}} \ar@(dl,ul)[]^{\hat{\mathrm{T}}_{}-\bun } \ar@{->>}@/^/[r]^{\hat{\mathrm{T}}-\bun } & \hat{\mathrm{F}} \ar@{_{(}->}@/^/[l]^{inc} \ar@(ur,dr)[]^{\mathrm{T}_{\hat{\mathrm{F}}}-\bun. }
}
\end{equation}
\noindent
Besides there is also the information provided by the Turrittin decomposition of the Fourier transform at infinity.
Let $(\hat{\mathcal{N}},\hat{\nabla})$
be the formalized of the meromorphic connexion $(\mathcal{N},\nabla)\doteq{\mathcal{M}}_{\mathcal{F}}[*\{\infty\}]$.
Its Turrittin~decomposition~is
$$(\hat{\mathcal{N}},\hat{\nabla})\simeq\bigoplus_{i=1}^k\hat{\mathcal{E}}^{\varphi_i}\otimes(\hat{\mathrm{R}}_i,\hat{\nabla}_i).$$
\noindent By Remark \ref{hbbcjbclkjbc},
each connexion $(\hat{\mathrm{R}}_i,\hat{\nabla}_i)$ is equivalent to
\[
\xymatrix{
\hat{\mathrm{F}}_i \ar@{->}@/^/[r]^-{\mathrm{T}_{\hat{\mathrm{F}}_i}-\bun } & \hat{\mathrm{F}}_i \ar@{=}@/^/[l]^-{\bun }.
}
\]

In \cite{mal} Malgrange shows analytically that $\mathrm{F}_i=\hat{\mathrm{F}}_i$ and $\hat{\mathrm{F}}=\mathrm{E}_{\infty}$, cf. \cite{mal}
Theorem XII.2.9. Moreover the later equality is a corollary of this stronger result.

\begin{lem}\label{hkbhlbfdxeraswa}
Let $\mathcal{M}$ be a regular holonomic $\mathcal{D}_{\mathbb{P}^1}[*\{\infty\}]$-module and 
$\mathcal{M}_{\mathcal{F}}$ its Fourier transform.
If $(\mathrm{E}',\mathrm{F}')$ (resp. $(\hat{E},\hat{F})$) is the pair of vector spaces equivalent
to $\mathcal{M}_{\infty}$ (resp. $\left(\mathcal{M}_{\mathcal{F}}\right)_0$), then $\hat{F}=F'$ and
$\mathrm{T}_{\hat{F}}=\mathrm{T}_{F'}$.
\end{lem}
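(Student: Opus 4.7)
The plan is to translate the statement via the equivalence of categories with $\Theta$, reduce everything to the formal germs, and then apply Malgrange's formal stationary phase (Theorem XII.2.9 of \cite{mal}) while tracking the monodromy action carefully.

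First, I would exploit that $\mathcal{M}=\mathcal{M}[*\{\infty\}]$, so the germ $\mathcal{M}_{\infty}$ is a meromorphic connection; by Remark \ref{hbbcjbclkjbc} together with Theorem \ref{pervers:classification} \romannumeral 3), this forces the pair $(\mathrm{E}',\mathrm{F}')$ attached to $\mathcal{M}_{\infty}$ to satisfy $\mathrm{E}'=\mathrm{F}'$, with $\mathrm{T}_{\mathrm{F}'}$ being the monodromy of $\mathcal{M}$ at $\infty$ on this common space. The lemma thereby reduces to producing a canonical isomorphism between $\hat{\mathrm{F}}$ and $\mathrm{F}'$ that intertwines $\mathrm{T}_{\hat{\mathrm{F}}}$ with $\mathrm{T}_{\mathrm{F}'}$.

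Next, since the ``$\mathrm{F}$'' component of a pair in $\Theta$ depends only on the formal germ (via Proposition \ref{formal:min:com} and the explicit description of $\hat{\mathrm{F}}$ as vanishing cycles), I would replace $\mathcal{M}_{\infty}$ and $(\mathcal{M}_{\mathcal{F}})_{0}$ by their formalizations. On the Fourier side, the Turrittin decomposition (\ref{t-dec}) together with Lemma \ref{Tourritin-monodromy} isolates the regular summand $(\hat{\mathrm{R}}_{1},\hat{\nabla}_{1})$ corresponding to $\varphi_{1}=0$ as the only contributor at $0$ — but since $\mathcal{M}_{\mathcal{F}}$ is already regular at $0$, the whole formal germ $\widehat{(\mathcal{M}_{\mathcal{F}})_{0}}$ is of this regular type. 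Malgrange's formal stationary phase supplies a canonical isomorphism $\mathcal{F}^{(\infty,0)}\colon\widehat{\mathcal{M}_{\infty}}\xrightarrow{\sim}\widehat{(\mathcal{M}_{\mathcal{F}})_{0}}$ of formal meromorphic connections. Reading off the pair associated to each side through Theorem \ref{pervers:classification} \romannumeral 3) then yields $\hat{\mathrm{F}}=\mathrm{F}'$; and because $\mathcal{F}^{(\infty,0)}$ is an isomorphism of formal connections, it intertwines the formal monodromies, giving $\mathrm{T}_{\hat{\mathrm{F}}}=\mathrm{T}_{\mathrm{F}'}$.

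The hard part will be making the stationary-phase identification explicitly compatible with the pair-of-vector-spaces description and verifying that the monodromy is genuinely preserved by $\mathcal{F}^{(\infty,0)}$ — this is exactly the refinement that upgrades the set-theoretic equality $\hat{\mathrm{F}}=\mathrm{E}_{\infty}$ recalled in the paragraph preceding the lemma to the monodromy-equivariant statement asserted here. Once this compatibility is in place, the conclusion is immediate.
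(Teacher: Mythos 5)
Your first paragraph is sound: since $\mathcal{M}=\mathcal{M}[*\{\infty\}]$, Remark \ref{hbbcjbclkjbc} does give $\mathrm{E}'=\mathrm{F}'=\mathrm{E}_{\infty}$ with $\mathrm{T}_{\mathrm{F}'}$ the monodromy at infinity, and the vanishing-cycle component of a pair indeed only depends on the formal germ. The gap is in your third step. The germ $(\mathcal{M}_{\mathcal{F}})_{0}$ is \emph{not} localized at $0$, so Theorem \ref{pervers:classification}~{\romannumeral 3}) and Remark \ref{hbbcjbclkjbc} do not apply to it: its pair is a genuine $(\hat{\mathrm{E}},\hat{\mathrm{F}})$ with $\hat{\mathrm{F}}$ the vanishing cycles, and ``reading off the pair as in~{\romannumeral 3})'' would identify $\mathrm{F}'$ with $\hat{\mathrm{E}}$, not with $\hat{\mathrm{F}}$. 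Worse, the isomorphism $\widehat{\mathcal{M}_{\infty}}\simeq\widehat{(\mathcal{M}_{\mathcal{F}})_{0}}$ of formal connections that you invoke cannot exist: even after localizing at $0$ the two sides have different ranks, namely $\dim\mathrm{E}$ versus $\dim\hat{\mathrm{E}}=\sum_{i}\dim\hat{\mathrm{F}}_{i}$, and the paper's own Corollary \ref{sbclhblslhdbcd} computes the generally nonzero difference $\dim\hat{\mathrm{E}}-\dim\mathrm{E}=\dim\ker(\hat{\mathrm{T}}_{0}-\bun)$. A concrete counterexample to your intermediate claim is $\mathcal{M}=\mathcal{O}_{\mathbb{P}^1}[*\{\infty\}]$, i.e.\ $\mathrm{M}=\mathbb{C}[t]$: its Fourier transform is $\mathbb{C}[\tau]\langle\partial_{\tau}\rangle/\mathbb{C}[\tau]\langle\partial_{\tau}\rangle\cdot\tau$, whose germ at $0$ is punctual, with $\hat{\mathrm{E}}=0$ and $\hat{\mathrm{F}}=\mathbb{C}$, while $\widehat{\mathcal{M}_{\infty}}$ has rank $1$. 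The lemma holds there ($\hat{\mathrm{F}}=\mathbb{C}=\mathrm{F}'$, trivial monodromies), but not by your route.

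What the stationary-phase theorem at $(\infty,0)$ actually identifies is the vanishing-cycle space of $\mathcal{M}_{\mathcal{F}}$ at $0$ with the nearby-cycle space of $\mathcal{M}$ at $\infty$, together with their monodromies --- which is precisely the assertion of the lemma. So the ``hard part'' you defer to the end is not a compatibility check to be layered on top of a formal-germ isomorphism; it is the entire content of the statement, and the reduction you propose (localize, apply Theorem \ref{pervers:classification}~{\romannumeral 3}), compare) does not reach it and in fact proves a false identity. Note also that the paper does not reprove the lemma: it cites an algebraic proof from \cite{paiva}, Lemma 2.6.22, explicitly contrasting it with Malgrange's analytic argument in \cite{mal}; an argument resting on Theorem XII.2.9 of \cite{mal} would at best reproduce the analytic proof, and as written yours does not do even that.
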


\begin{proof}
An algebraic proof can be found in \cite{paiva} Lemma 2.6.22.
\end{proof}

\noindent Given all this one is led to believe that the following two lemmas are true and one 
proves that algebraically.

\begin{lem}\label{fdffdjrdfjtgvjj}
If $\mathcal{M}_{\mathrm{min}}=\mathcal{M}$, then for each
$x_i\in\Sigma\cap\mathbb{C}$ the monodromies
$\mathrm{T}_{\mathrm{F}_i}:\mathrm{F}_i\rightarrow\mathrm{F}_i$
and $\mathrm{T}_{\hat{\mathrm{F}}_i}:\hat{\mathrm{F}}_i\rightarrow\hat{\mathrm{F}}_i$ are conjugated,
i.e. $\mathcal{M}_{x_i}$ is equivalent to
\[
\xymatrix{
\mathrm{E} \ar@(dl,ul)[]^{\mathrm{T}_{i}-\bun } \ar@{->>}@/^/[r]^{\mathrm{T}_{i}-\bun } & \mathrm{F}_i \ar@{_{(}->}@/^/[l]^{inc} \ar@(ur,dr)[]^{\mathrm{T}_{\hat{\mathrm{F}}_i}-\bun. }
}\]
\end{lem}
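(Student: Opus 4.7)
The plan is to reduce, by translation, to the case $x_i=0$ and then invoke a ``mirror'' form of Lemma \ref{hkbhlbfdxeraswa} that isolates the regular summand of the Turrittin decomposition at $\infty$. Fix $x_i\in\Sigma\cap\mathbb{C}$ and let $\tau$ denote the translation $y\mapsto y+x_i$; set $\mathcal{M}^{(i)}:=\tau^*\mathcal{M}$, so that the singular point $x_i$ of $\mathcal{M}$ becomes $0$ of $\mathcal{M}^{(i)}$. Translation is an equivalence of holonomic $\mathcal{D}_{\mathbb{P}^1}[*\{\infty\}]$-modules preserving minimality, so $\mathcal{M}^{(i)}$ remains its own minimal extension and the germ $\bigl(\mathcal{M}^{(i)}\bigr)_0$ is represented in $\Theta$ by the same pair $(E,F_i)$, with the same monodromies $T_i$ and $T_{F_i}$, as the germ $\mathcal{M}_{x_i}$.

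On the Fourier side, a direct computation on the generators of $A_1$ (using $\mathcal{F}(x)=-\partial_x$ and $\mathcal{F}(\partial_x)=x$) gives the intertwining $\mathcal{F}\circ\tau^*\simeq(e^{x_ix}\otimes-)\circ\mathcal{F}$, and hence $\mathcal{M}^{(i)}_{\mathcal{F}}\simeq e^{x_ix}\otimes\mathcal{M}_{\mathcal{F}}$. At infinity this tensor product shifts each Turrittin exponential factor $\varphi_j$ by the formal expansion of $e^{x_ix}$; since algebraic stationary phase pins $\varphi_i$ precisely to the value that is canceled by this shift, the $i$-th Turrittin summand of $\mathcal{M}^{(i)}_{\mathcal{F}}$ at $\infty$ has trivial exponential factor and coincides with $(\hat{R}_i,\hat{\nabla}_i)$, still carrying the pair $(\hat{F}_i,T_{\hat{F}_i})$. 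The remaining summands $\hat{\mathcal{E}}^{\varphi'_j}\otimes\hat{R}^{(i)}_j$, $j\neq i$, have nonzero exponential factors and therefore contribute trivially to both nearby and vanishing cycles, so the pair in $\Theta$ representing the formalization of $\mathcal{M}^{(i)}_{\mathcal{F}}$ at $\infty$ reduces to that of $\hat{R}_i$, namely $(\hat{F}_i,T_{\hat{F}_i})$.

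Finally, the ``mirror'' form of Lemma \ref{hkbhlbfdxeraswa} — obtained by applying Fourier inversion $\mathcal{F}^2\simeq(-\mathrm{id})^*$ to $\mathcal{M}^{(i)}_{\mathcal{F}}$ — identifies the F-part at $\infty$ of $\mathcal{M}^{(i)}_{\mathcal{F}}$ with the F-part at $0$ of $\mathcal{M}^{(i)}$. Combining this with the preceding paragraph yields $\hat{F}_i\simeq F_i$ and $T_{\hat{F}_i}$ conjugate to $T_{F_i}$, which is the claim; since $-\mathrm{id}$ fixes $0$, its contribution to the monodromy conjugation is trivial.

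The main obstacle is establishing the mirror form of Lemma \ref{hkbhlbfdxeraswa}: the lemma as stated runs from the pair at $\infty$ of a regular $\mathcal{D}_{\mathbb{P}^1}[*\{\infty\}]$-module to the pair at $0$ of its Fourier transform, whereas here one needs the converse, valid when the target is irregular at $\infty$ after extracting the regular Turrittin summand. Algebraically, this amounts to verifying that the pair-of-vector-spaces description of a formal germ at $\infty$ sees only the regular Turrittin summand — a statement which rests on the absence of moderate formal solutions of $\hat{\mathcal{E}}^{\varphi}$ for $\varphi\ne 0$ and which fits in the framework of \cite{paiva,ms1}.
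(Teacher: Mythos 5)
Your reduction by translation is sound as far as it goes: the intertwining $\mathcal{F}\circ\tau^{*}\simeq(e^{x_i\tau}\otimes-)\circ\mathcal{F}$ is correct (up to sign conventions), and it does move the Turrittin summand attached to $x_i$ into the summand with trivial exponential factor. But the argument then rests on two assertions that you yourself flag as ``the main obstacle'' and never establish, and these are precisely the mathematical content of the lemma. First, the claim that ``the pair in $\Theta$ representing the formalization of $\mathcal{M}^{(i)}_{\mathcal{F}}$ at $\infty$ reduces to that of $\hat{\mathrm{R}}_i$'' is not even well posed as stated: the equivalence with $\Theta$ (Theorem \ref{pervers:classification}, Remark \ref{hbbcjbclkjbc}) is only available for \emph{regular} holonomic germs, whereas $\mathcal{M}^{(i)}_{\mathcal{F}}$ is irregular at $\infty$; and the purely irregular summands $\hat{\mathcal{E}}^{\varphi}\otimes\hat{\mathrm{R}}_j$ with $\varphi\neq0$ do \emph{not} have vanishing analytic nearby cycles (the Betti nearby cycles of $\mathcal{E}^{1/x}$ have rank one), so you must specify which functor to $\Theta$ you are using and prove that it kills these summands. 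Second, the ``mirror'' form of Lemma \ref{hkbhlbfdxeraswa} cannot be obtained by applying that lemma to $N=\mathcal{M}^{(i)}_{\mathcal{F}}$ together with Fourier inversion, because the hypothesis of Lemma \ref{hkbhlbfdxeraswa} is that the input be a \emph{regular} $\mathcal{D}_{\mathbb{P}^1}[*\{\infty\}]$-module, which $N$ is not. What you actually need is the local stationary-phase statement: the regular Turrittin summand of $\mathcal{M}_{\mathcal{F}}$ at $\infty$ attached to $x_i$ is computed, with its monodromy, by the vanishing cycles of $\mathcal{M}$ at $x_i$. That is the lemma itself, so at this point the argument is circular.

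For comparison, the paper closes exactly this gap via Sabbah's microlocalization: Lemma \ref{euhdlutfdndkftrud} and Propositions \ref{koegkdfkgfgkbfkgvff} and \ref{qjbsdlsjcnsdjsjcsljc} identify $(\mathcal{M}_{x_i})^{\mu}$, after the twist by the exponential, with the regular connexion $(\hat{\mathrm{R}}_i,\hat{\nabla}_i)$, and the four-term exact sequence $0\to\ker\mu\to\mathcal{M}_{x_i}\to(\mathcal{M}_{x_i})^{\mu}\to\mathrm{coker}\,\mu\to0$, whose extreme terms are powers of $\mathcal{D}_{x_i}/\mathcal{D}_{x_i}\partial_{x_i}^{k}$ and hence have trivial $\mathrm{F}$-part, is transported to $\Theta$; a diagram chase then produces the isomorphism $\beta\colon\mathrm{F}_i\to\hat{\mathrm{F}}_i$ conjugating the monodromies. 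To salvage your approach you would have to supply an analogous identification (a formal stationary-phase formula valid after your translation); the translation trick alone does not provide it.
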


\begin{lem}\label{sdusncdlsfdxjfsdjcsud}
If $({\mathcal{M}}_{\mathcal{F}})_{\mathrm{min}}={\mathcal{M}}_{\mathcal{F}}$ then
the monodromies at $\tau=0$
$\mathrm{T}_{\hat{\mathrm{E}}}:\hat{\mathrm{E}}\rightarrow\hat{\mathrm{E}}$
and $\mathrm{T}_{\infty}:\mathrm{F}_{\infty}\rightarrow\mathrm{F}_{\infty}$ are conjugated,
i.e. $({\mathcal{M}}_{\mathcal{F}})_{0}$ is equivalent to
\[
\xymatrix{
\hat{\mathrm{E}} \ar@(dl,ul)[]^{\hat{\mathrm{T}}_{0}-\bun } \ar@{->>}@/^/[r]^{\mathrm{T}_{0}-\bun } & \hat{\mathrm{F}} \ar@{_{(}->}@/^/[l]^{inc} \ar@(ur,dr)[]^{\mathrm{T}_{\infty}-\bun. }
}
\]
\end{lem}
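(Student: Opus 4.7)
My plan is to mirror the proof scheme of Lemma \ref{fdffdjrdfjtgvjj}, but performed at the origin of $\mathcal{M}_{\mathcal{F}}$ rather than at a finite singularity of $\mathcal{M}$. Two ingredients already in place make this possible: Theorem \ref{pervers:classification}{\romannumeral 2}) together with Definition \ref{asdnsjdnja}, which pin down the $\Theta$-shape of any perverse germ coinciding with its own minimal extension; and Lemma \ref{hkbhlbfdxeraswa}, which identifies the right-hand vertex $\hat{F}$ of the pair attached to $({\mathcal{M}}_{\mathcal{F}})_0$ with the $F$-component $F_\infty = F'$ of the pair attached to $\mathcal{M}_\infty$, with matching monodromies $\mathrm{T}_{\hat{F}} = \mathrm{T}_{F'} = \mathrm{T}_\infty$.

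First I would localize the global hypothesis $({\mathcal{M}}_{\mathcal{F}})_{\mathrm{min}} = {\mathcal{M}}_{\mathcal{F}}$ at the point $0$. Proposition \ref{min} characterizes the minimal extension via the algebraic-support functor $\mathrm{H}_{[0]}$, so the global hypothesis implies that the germ $({\mathcal{M}}_{\mathcal{F}})_0$ has no submodule nor quotient coinciding with its algebraic support at $0$; that is, its local minimal extension is again itself. Since $\mathcal{M}$ is regular and localized at $\infty$, the Fourier transform $\mathcal{M}_{\mathcal{F}}$ has a regular singularity at $0$, so on a small disk $D$ centered at $0$ the Riemann--Hilbert correspondence applies and translates this localized minimality into the condition that the perverse germ $\mathrm{DR}(\mathcal{M}_{\mathcal{F}})_0$ coincides with its minimal extension $j_{*}\mathcal{L}$, where $\mathcal{L}$ is the local system on $D^{*}$ associated to $\mathcal{M}_{\mathcal{F}}$.

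Next I would apply Theorem \ref{pervers:classification}{\romannumeral 2}): the pair in $\Theta$ representing $j_{*}\mathcal{L}$ has right arrow the surjection $\mathrm{T}_{\hat{E}}-\bun : \hat{E} \twoheadrightarrow \hat{F}$ and left arrow the inclusion $\hat{F} \hookrightarrow \hat{E}$, with loop $\mathrm{T}_{\hat{E}}-\bun$ on the left-hand vertex and $\mathrm{T}_{\hat{F}}-\bun$ on the right-hand vertex. This is already the skeleton of the diagram in the statement; the only remaining task is to rename the monodromy on $\hat{F}$, which by Lemma \ref{hkbhlbfdxeraswa} is exactly $\mathrm{T}_{\infty}$. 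Substituting this identification into the diagram yields the claim.

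The main obstacle I anticipate is organizational rather than mathematical: one has to check that the global minimality of $\mathcal{M}_{\mathcal{F}}$ passes to local minimality of the germ at $0$ in the sense needed to feed Theorem \ref{pervers:classification}{\romannumeral 2}), and that the three formalisms in play --- holonomic $\mathcal{D}$-modules, perverse sheaves on a disk, and $\Theta$-pairs --- are aligned so that ``equals its own minimal extension'' means the same thing in each. Once those compatibilities are recorded, no further calculation is required: the result is a straight composition of Theorem \ref{pervers:classification}{\romannumeral 2}) with Lemma \ref{hkbhlbfdxeraswa}.
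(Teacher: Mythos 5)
Your argument is correct, but it is not the route the paper takes, so let me compare the two. The paper's proof does not go through Theorem \ref{pervers:classification}~{\romannumeral 2}) directly: it starts from the four-term exact sequence $0\to\ker\lambda\to\mathcal{M}_{\infty}\to\mathcal{M}_{\infty}[*\{\infty\}]\to\mathrm{coker}\,\lambda\to 0$ attached to the localization map at infinity, notes that $\ker\lambda$ and $\mathrm{coker}\,\lambda$ are of the form $\mathcal{D}_{t'}/\mathcal{D}_{t'}t'^{k}$ and hence Fourier-transform into $\mathcal{D}_{\tau}/\mathcal{D}_{\tau}\partial_{\tau}^{k}$ (pairs of the shape $(\mathbb{C}^{k},0)$), translates the Fourier-transformed sequence into an exact sequence of pairs, and extracts by a diagram chase an explicit isomorphism $\beta:\hat{F}\to\widehat{F'}$ intertwining the two $F$-monodromies; Lemma \ref{hkbhlbfdxeraswa} is invoked only at the very end, for the localized module. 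Your proof short-circuits this: you take the minimal shape of the pair at $0$ from Theorem \ref{pervers:classification}~{\romannumeral 2}) (which the paper in fact already records in the discussion preceding the two lemmas) and let Lemma \ref{hkbhlbfdxeraswa} carry the whole identification $T_{\hat{F}}=T_{F'}=T_{\infty}$. That is legitimate provided Lemma \ref{hkbhlbfdxeraswa} is read as identifying the canonical $F$-vertices \emph{together with their monodromy operators} (which is how it is stated), and provided $\mathcal{M}$ is localized at infinity so that $F'=F_{\infty}$ and $T_{F'}=T_{\infty}$ --- both hold under the running hypotheses. What the paper's longer route buys is an explicit conjugating map and an argument that survives when $\mathcal{M}_{\infty}$ is not already localized (it reduces to the localized module before applying Lemma \ref{hkbhlbfdxeraswa}); in the localized case its exact sequence degenerates ($k=k'=0$) and the two proofs essentially collapse to the same thing. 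Your preliminary step --- passing from global minimality of $\mathcal{M}_{\mathcal{F}}$ to minimality of the germ at $0$ and then to the perverse germ $j_{*}\mathcal{L}$ --- is also sound and matches what the paper does implicitly via Proposition \ref{min}, Theorem \ref{prop:min:pervers} and Definition \ref{asdnsjdnja}.
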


To prove Lemma \ref{fdffdjrdfjtgvjj}, one briefly recalls the notation used by Sabbah in \cite{cs2} 
to compute the Turrittin decomposition of $(\hat{\mathcal{N}},\hat{\nabla})$ in the microlocalization context.
Those notations will be used just
in the following three statements.

\begin{note}
The Fourier transform is the isomorphism of algebras:
\begin{eqnarray*}
\mathbb{C}[t]\langle\partial_t\rangle & \longrightarrow & \mathbb{C}[\tau']\langle\partial_{\tau'}\rangle\\
t & \longmapsto & -\partial_{\tau'}\\
\partial_t & \longmapsto & \tau'
\end{eqnarray*}
\noindent denoted by $P\mapsto \hat{P}$. Every $\mathbb{C}[t]\langle\partial_t\rangle$-module $\mathrm{M}$
becomes this way a $\mathbb{C}[\tau']\langle\partial_{\tau'}\rangle$-module denoted by $\hat{\mathrm{M}}$
and called the Fourier transform of $\mathrm{M}$. From now on $\mathrm{M}$ is assumed holonomic with regular
singularities including at infinity. One knows that $\mathrm{M}$ only has a regular singularity at $\tau'=0$
and a possibly irregular singularity at $\tau'=\infty$. The localized module $\hat{\mathrm{M}}[\tau']$ is still
holonomic and is a free $\mathbb{C}[\tau',\tau^{\prime-1}]$-module of finite rank. It can be regarded as
a meromorphic bundle on the Riemann sphere $\mathbb{P}^1$, covered by charts with coordinates $\tau$ and
$\tau'$ and with the transition map $\tau=\tau^{\prime-1}$ on their intersection. Thus it is a free
$\mathbb{C}[\tau,\tau^{-1}]$-module. Finally one denotes by $\mathcal{M}$ the $\mathcal{D}_{\mathbb{P}^1}$-module
$\mathcal{O}_{\mathbb{P}^1}\otimes_{\mathbb{C}[t]}\mathrm{M}$.
\end{note}

\begin{lem}
\emph{(\cite{cs2}, Lemma 3.3)}
\label{euhdlutfdndkftrud}
The microlocalized module $\mathcal{M}^{\mu}$ has support in the set of singular points of $\mathcal{M}$.
\end{lem}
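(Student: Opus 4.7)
The plan is to verify the lemma by showing that at every non-singular point $p \in \mathbb{P}^1$ of $\mathcal{M}$ the stalk $\mathcal{M}^{\mu}_p$ vanishes. Recall that $p$ is non-singular for $\mathcal{M}$ iff $\mathcal{M}_p$ is a free $\mathcal{O}_{\mathbb{P}^1,p}$-module of finite rank equipped with an integrable connection; equivalently, the characteristic variety of $\mathcal{M}$ above a neighborhood of $p$ reduces to the zero section of $T^*\mathbb{P}^1$. The microlocalized module $\mathcal{M}^{\mu}$, in the sense used by Sabbah, is obtained by tensoring $\mathcal{M}$ with the sheaf $\mathcal{E}_{\mathbb{P}^1}$ of (formal) microdifferential operators in $\partial_t$, which contains $\partial_t^{-1}$. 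By construction, the support of $\mathcal{M}^{\mu}$ in the cotangent bundle is contained in $\mathrm{Char}(\mathcal{M}) \setminus T^*_{\mathbb{P}^1}\mathbb{P}^1$; projecting to $\mathbb{P}^1$, this yields exactly the singular locus of $\mathcal{M}$.

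To verify the vanishing concretely at a non-singular $p$, I would exploit a local cyclic presentation $\mathcal{M}|_U = \mathcal{D}_U/\mathcal{D}_U P$ with $P = \sum_{k=0}^n a_k(t)\partial_t^k$ and leading coefficient $a_n(p) \neq 0$; this ellipticity is equivalent to non-singularity of $\mathcal{M}$ at $p$. In $\mathcal{E}_{\mathbb{P}^1, p}$ one factors $P = a_n(t)\partial_t^n \cdot \bigl(1 + \sum_{k<n}(a_k/a_n)\partial_t^{k-n}\bigr)$; the parenthesized factor is invertible as a Neumann series in $\partial_t^{-1}$, while $a_n(t)\partial_t^n$ is invertible since $a_n$ is a unit in $\mathcal{O}_{\mathbb{P}^1, p}$ and $\partial_t$ is invertible in $\mathcal{E}$. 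Hence $P$ is invertible in $\mathcal{E}_{\mathbb{P}^1, p}$, forcing $\mathcal{M}^{\mu}_p = \mathcal{E}_{\mathbb{P}^1, p}/\mathcal{E}_{\mathbb{P}^1, p} \cdot P = 0$. For a non-cyclic coherent $\mathcal{D}$-module, the same argument applied to a matrix presentation whose determinant is elliptic near $p$ completes the argument.

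The main obstacle is matching Sabbah's exact microlocalization formalism—whether one works with convergent or formal microdifferential operators, and whether the microlocalization is taken with respect to $\partial_t$ or to the Fourier variable $\tau'$—but the essential content, namely that ellipticity of the defining operator at $p$ forces invertibility in the microdifferential algebra and thereby the vanishing of $\mathcal{M}^{\mu}$, is robust across these variants and yields the desired support containment.
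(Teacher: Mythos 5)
The paper gives no proof of this lemma: it is quoted verbatim from Sabbah \cite{cs2}, Lemma 3.3, so there is nothing internal to compare against. Your argument is the standard one and is, in substance, the proof of the cited source: at a non-singular point $p$ the holonomic module is an $\mathcal{O}$-coherent connection, and microlocalization annihilates connections because an operator that is elliptic at $p$ (in particular $\partial_t$ itself) becomes invertible in the microdifferential ring, so the induced presentation of $\mathcal{M}^{\mu}_p$ has zero cokernel.

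Two small points of hygiene. First, the displayed factorization $P=a_n(t)\partial_t^n\bigl(1+\sum_{k<n}(a_k/a_n)\partial_t^{k-n}\bigr)$ is not literally an identity in the noncommutative ring, since $\partial_t^n$ does not commute with the coefficients $a_k/a_n$; the correct statement, obtained by successive approximation in the $\partial_t^{-1}$-adic filtration, is $P=a_n\partial_t^n(1+Q)$ with $Q$ of order $\le -1$, after which the Neumann series argument applies verbatim. Second, for the non-cyclic case the appeal to ``a matrix presentation whose determinant is elliptic'' is vaguer than necessary: at a non-singular point one has $\mathcal{M}_p\simeq(\mathcal{O}_p^{\,n},d+A\,dt)$, presented by the matrix operator $\partial_t-A=\partial_t(1-\partial_t^{-1}A)$, which is invertible over the microdifferential ring by the same Neumann series; alternatively, in the analytic local setting a flat basis trivializes the connection, so $\mathcal{M}_p\simeq(\mathcal{D}_p/\mathcal{D}_p\partial_t)^n$ and the vanishing is immediate. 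With these adjustments the proof is complete and matches the intended one.
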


\noindent As $\mathcal{M}^{\mu}$ has support at the singular points $c$ of $\mathcal{M}$,
one can write $\Gamma(\mathbb{C},\mathcal{M}^{\mu})=\oplus_c\mathcal{M}^{\mu}_c$.

\begin{prop}\label{koegkdfkgfgkbfkgvff}
\emph{(\cite{cs2}, Proposition 3.4)}
At any singular point $c$ of $\mathcal{M}$, the germ
$\mathcal{E}^{c/\tau}\otimes\mathcal{M}^{\mu}_c$ is a
$(\hat{k}\doteq\mathbb{C}[\![\tau]\!][\tau^{-1}],\hat{\nabla})$-vector space with regular
singularities.
\end{prop}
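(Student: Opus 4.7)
The plan is to reduce the claim to the case $c=0$ by a translation in $t$, and then establish regularity in this reduced setting via a direct computation on the Fourier transform.

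First, by Lemma \ref{euhdlutfdndkftrud}, $\Gamma(\mathbb{C},\mathcal{M}^\mu)=\bigoplus_c\mathcal{M}^\mu_c$, so I may fix a single singular point $c\in\mathbb{C}$. Apply the translation $s=t-c$ and let $\widetilde{\mathcal{M}}$ denote the resulting $\mathbb{C}[s]\langle\partial_s\rangle$-module, whose corresponding singularity sits at $s=0$. Transporting this translation through the Fourier transform via $s\mapsto-\partial_{\sigma'}$ and $\partial_s\mapsto\sigma'$ yields, after identifying $\sigma'=\tau'$, the relation $\partial_{\sigma'}=\partial_{\tau'}+c$ on the common underlying space, which is precisely the statement
$$\widehat{\widetilde{\mathcal{M}}}\simeq\hat{\mathcal{M}}\otimes\mathcal{E}^{c\tau'}.$$
In the chart $\tau=1/\tau'$ this reads $\widehat{\widetilde{\mathcal{M}}}\simeq\hat{\mathcal{M}}\otimes\mathcal{E}^{c/\tau}$. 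Restricting to the microlocalizations at the respective singular points gives
$$\widetilde{\mathcal{M}}^\mu_0\simeq\mathcal{E}^{c/\tau}\otimes\mathcal{M}^\mu_c,$$
so the proposition reduces to showing that $\widetilde{\mathcal{M}}^\mu_0$ is a regular $(\hat{k},\hat{\nabla})$-vector space.

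Second, I prove that for any regular holonomic $\mathcal{D}$-module $\mathcal{N}$ whose only finite singularity is $0$ and which is regular at $\infty$, the Fourier transform $\hat{\mathcal{N}}$ is regular at $\tau'=\infty$, so a fortiori its microlocalization at $\tau=0$ is regular. By d\'evissage along a composition series one reduces to irreducible building blocks, namely $\mathcal{O}$, skyscraper modules at $0$, and the Kummer-type modules $\mathcal{D}/\mathcal{D}(t\partial_t-\alpha)$. For each of these an explicit computation suffices: the key case uses $\widehat{t\partial_t-\alpha}=-\tau'\partial_{\tau'}-(1+\alpha)$, so that the Fourier transform is the Fuchsian operator $\tau'\partial_{\tau'}+(1+\alpha)$, manifestly regular at both $\tau'=0$ and $\tau'=\infty$.

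The main obstacle is justifying that regularity at $\tau'=\infty$ is preserved when reassembling the irreducible pieces into $\widetilde{\mathcal{M}}$. This amounts to the additivity of the irregularity along short exact sequences of regular holonomic modules, together with its vanishing on each composition factor. Conceptually this is a shadow of the stationary phase principle: the exponential factors appearing in the Turrittin decomposition of $\hat{\mathcal{M}}$ at $\tau'=\infty$ are indexed exactly by the finite singularities of $\mathcal{M}$, and after the translation carried out in the first step no nontrivial such factor can survive, forcing the microlocalization to be regular.
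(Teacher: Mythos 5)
The paper offers no proof of this statement---it is imported verbatim from Sabbah (\cite{cs2}, Proposition 3.4)---so your attempt stands on its own. Its first step is sound: translating by $c$ twists the Fourier transform by $\mathcal{E}^{c\tau'}$, which identifies $\mathcal{E}^{c/\tau}\otimes\mathcal{M}^{\mu}_c$ with $\widetilde{\mathcal{M}}^{\mu}_0$. The explicit computations for $\mathcal{O}$, the skyscrapers and the Kummer modules are correct, and additivity of the irregularity does upgrade them to any regular holonomic $\mathcal{N}$ whose \emph{only} finite singular point is $0$.

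The gap is the passage from that special case to $\widetilde{\mathcal{M}}$ itself. After the translation, $\widetilde{\mathcal{M}}$ still carries all the other singular points $\gamma_i-c\neq 0$ of $\mathcal{M}$, and its composition factors include irreducible modules singular at those points; their Fourier transforms carry exponential factors $\mathcal{E}^{(\gamma_i-c)\tau'}$ at $\tau'=\infty$ (already $\delta_{\gamma_i-c}$ transforms into $\mathcal{E}^{(\gamma_i-c)\tau'}$), so $\widehat{\widetilde{\mathcal{M}}}$ is genuinely irregular there whenever $\mathcal{M}$ has more than one finite singularity. Additivity of the irregularity along a composition series of $\widetilde{\mathcal{M}}$ therefore cannot deliver regularity at $\tau'=\infty$; it shows the opposite. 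What is regular is only the single summand $\widetilde{\mathcal{M}}^{\mu}_0$ of $\hat{k}\otimes\widehat{\widetilde{\mathcal{M}}}\simeq\bigoplus_{c'}\widetilde{\mathcal{M}}^{\mu}_{c'}$ (Proposition \ref{qjbsdlsjcnsdjsjcsljc}), and isolating that summand together with its exponential factor is exactly the content of the proposition: the closing appeal to ``the stationary phase principle'' assumes what is to be proved, and the sentence ``no nontrivial such factor can survive'' is false as soon as $\mathrm{S}$ has more than one element. The missing idea is the locality of microlocalization: $\mathcal{M}^{\mu}_c$ depends only on the germ $\mathcal{M}_c$, so one must either replace $\widetilde{\mathcal{M}}$ by a module with the same germ at $0$ and no other finite singularity before invoking your second step, or d\'evisser the germ at $0$ into local elementary pieces and compute their microlocalizations directly, as Sabbah does. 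As written, your argument proves the proposition only when $\mathcal{M}$ has a single finite singular point.
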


\begin{prop}
\emph{(\cite{cs2}, Proposition 3.6)}
\label{qjbsdlsjcnsdjsjcsljc}
The composed $\mathbb{C}[\![\tau]\!]$-linear mapping
$$\mathbb{G}^{\hat{}}\doteq\hat{k}\otimes_{\mathbb{C}[\tau^{-1}]}\hat{\mathrm{M}}\rightarrow\Gamma(\mathbb{C},\hat{k}\otimes_{\mathbbm{C}[\partial_t]}\mathcal{M})\rightarrow\Gamma(\mathbb{C},\mathcal{M}^{\mu})$$
\noindent is an isomorphism.
\end{prop}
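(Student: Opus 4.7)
The plan is to establish this isomorphism by verifying that both sides are finitely generated $\hat{k}$-modules of the same rank and that the canonical composition is injective (equivalently surjective). I would split the argument into a rank computation and an injectivity check, with the stationary-phase decomposition at each finite singular point of $\mathcal{M}$ serving as the bridge.

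First, I would pin down the $\hat{k}$-module structures on both sides. The source is a base change of $\hat{\mathrm{M}}[\tau']$, which is free of finite rank $r$ over $\mathbb{C}[\tau,\tau^{-1}]$, to $\hat{k}=\mathbb{C}[\![\tau]\!][\tau^{-1}]$, and is therefore a free $\hat{k}$-module of rank $r$. The target, by Lemma \ref{euhdlutfdndkftrud}, decomposes as the finite direct sum $\Gamma(\mathbb{C},\mathcal{M}^{\mu})=\bigoplus_c\mathcal{M}^{\mu}_c$ over the singular points $c$ of $\mathcal{M}$, each summand being a $\hat{k}$-vector space through the microlocalization action of $\partial_t$. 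The identity $r=\sum_c\dim_{\hat{k}}\mathcal{M}^{\mu}_c$ is essentially a stationary-phase statement: for regular holonomic $\mathbb{C}[t]\langle\partial_t\rangle$-modules the generic rank of the Fourier transform equals the total vanishing-cycle dimension at the finite singularities, which in turn matches the sum of the microlocal stalks' ranks.

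Next I would analyze the map itself. The first arrow $\hat{k}\otimes_{\mathbb{C}[\tau^{-1}]}\hat{\mathrm{M}}\to\Gamma(\mathbb{C},\hat{k}\otimes_{\mathbb{C}[\partial_t]}\mathcal{M})$ is the natural comparison between a formal completion of $\hat{\mathrm{M}}$ and the global sections of its geometric realization on $\mathbb{A}^1$; its injectivity follows from flatness of $\hat{k}$ over $\mathbb{C}[\partial_t]=\mathbb{C}[\tau^{-1}]$ together with the fact that $\Gamma(\mathbb{C},\mathcal{M})$ recovers the finitely generated $A_1$-module $\mathrm{M}$. The second arrow is the canonical projection onto the microlocalization, which by Proposition \ref{koegkdfkgfgkbfkgvff} records precisely the exponential-twist piece $\mathcal{E}^{c/\tau}\otimes\mathcal{M}^{\mu}_c$ at each singular point; its kernel is concentrated in the regular-singular part at $\tau=0$ of the Fourier transform, which is killed by inverting $\tau$ and completing. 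Combining these two steps, the composition is an injective $\hat{k}$-linear map of free $\hat{k}$-modules of equal finite rank, hence an isomorphism.

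The main obstacle will be the rank-matching step, i.e.\ the stationary-phase identification of the rank of $\hat{\mathrm{M}}[\tau']$ with $\sum_c\dim_{\hat{k}}\mathcal{M}^{\mu}_c$, and the careful verification that the exponential twist $\mathcal{E}^{c/\tau}$ in Proposition \ref{koegkdfkgfgkbfkgvff} is exactly the one produced by the Turrittin decomposition of $(\hat{\mathcal{N}},\hat{\nabla})$ via the composition above. Once those identifications are secured, the rest is a standard flatness and dimension-count argument.
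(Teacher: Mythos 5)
Note first that the paper itself offers no proof of this proposition: it is imported verbatim from \cite{cs2} (Proposition 3.6) as a quoted external result, so your proposal has to stand entirely on its own. As it stands it has two genuine gaps. The first is the rank count, which you yourself flag as ``the main obstacle'' but then leave as an assertion. The identity $\mathrm{rank}_{\hat{k}}(\hat{k}\otimes_{\mathbb{C}[\tau^{-1}]}\hat{\mathrm{M}})=\sum_c\dim_{\hat{k}}\mathcal{M}^{\mu}_c$ packages two substantial theorems: the stationary-phase/vanishing-cycle formula for the generic rank of the Fourier transform of a regular holonomic module, and the microlocal index formula identifying $\dim_{\hat{k}}\mathcal{M}^{\mu}_c$ with the dimension of vanishing cycles at $c$. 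Neither is proved or precisely cited, and in Sabbah's development the former is essentially a \emph{corollary} of the isomorphism you are trying to establish, so you must either point to independent proofs or restructure the argument to avoid circularity.

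The second gap is that injectivity of the composite does not follow from your separate treatment of the two arrows. Your justification of the first arrow rests on the claim that $\Gamma(\mathbb{C},\mathcal{M})$ ``recovers'' $\mathrm{M}$; but $\mathcal{M}=\mathcal{O}_{\mathbb{P}^1}\otimes_{\mathbb{C}[t]}\mathrm{M}$ is an analytic sheaf, and its sections over the affine chart form a much larger space (already for $\mathrm{M}=\mathbb{C}[t]$ one gets all entire functions), so flatness of $\hat{k}$ over $\mathbb{C}[\tau^{-1}]$ does not by itself give injectivity of $\hat{k}\otimes_{\mathbb{C}[\tau^{-1}]}\hat{\mathrm{M}}\rightarrow\Gamma(\mathbb{C},\hat{k}\otimes_{\mathbb{C}[\partial_t]}\mathcal{M})$; note also that $\Gamma$ need not commute with this tensor product. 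Since you concede that the second arrow has a nontrivial kernel, injectivity of the composite would in any case require showing that the image of the first arrow meets that kernel trivially, which you do not address. (There is also a confusion of variables: the torsion killed by microlocalization is $\partial_t$-torsion, i.e.\ supported at $\tau'=0$, which is $\tau=\infty$, not the ``regular-singular part at $\tau=0$.'') The missing ingredient is the global comparison on $\mathbb{P}^1$ --- the fact that $\mathrm{M}$ is recovered as global sections of the module localized along $\infty$ over all of $\mathbb{P}^1$, together with the vanishing of the relevant $H^1$ --- which is what actually drives the proof in \cite{cs2} and cannot be replaced by the local flatness and dimension-count considerations you propose.
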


\begin{proof}[Proof of Lemma \ref{fdffdjrdfjtgvjj}]
Thanks to Lemma \ref{euhdlutfdndkftrud}, for each $x_i\in\Sigma\cap\mathbb{C}$ the microlocalization morphism $\mu$
allows the construction of the exact sequence of holonomic $\mathcal{D}_{x_i}$-modules
\begin{equation}\label{dsdcdjgfcjggcj}
\xymatrix{
0 \ar[r] & \ker\mu \ar@{^{(}->}[r] & \mathcal{M}_{x_i} \ar@{->}[r]^-{\mu} & (\mathcal{M}_{x_i})^{\mu} \ar[r] & \mathrm{coker}\mu \ar[r] & 0.
}
\end{equation}

\noindent As $\ker\mu\simeq\mathcal{D}_{x_i}/\mathcal{D}_{x_i}.(\partial_{x_i})^k$
(resp. $\mathrm{coker}\mu\simeq\mathcal{D}_{x_i}/\mathcal{D}_{x_i}.(\partial_{x_i})^{k'}$)
for a given $k\in\mathbb{N}$ (resp. $k'\in\mathbb{N}$ ), $\ker\mu$ (resp. $\mathrm{coker}\mu$) is equivalent to
$\xymatrix{\mathbb{C}^k \ar@/^/[r] & 0 \ar@/^/[l]}$ (resp. $\xymatrix{\mathbb{C}^{k'} \ar@/^/[r] & 0 \ar@/^/[l]}$).
By Proposition \ref{koegkdfkgfgkbfkgvff} $(\mathcal{M}_{x_i})^{\mu}$ is a regular meromorphic connexion
therefore Remark \ref{hbbcjbclkjbc} implies that it is equivalent to
\[
\xymatrix{
\hat{\mathrm{F}}_i \ar@{->}@/^/[r]^-{\mathrm{T}_{\hat{\mathrm{F}}_i}-\bun } & \hat{\mathrm{F}}_i \ar@{=}@/^/[l]^-{\bun }.
}
\]
\noindent These equivalences imply that the exact sequence (\ref{dsdcdjgfcjggcj}) is equivalent to the exact
sequence of pairs of vector spaces
\[
\xymatrix @R=2em @C=3.5em {
0 \ar[r] & \mathbb{C}^k \ar@{^{(}->}[r] \ar@/^/[d] & \mathrm{E} \ar@/^/@{->>}[d]^{ \mathrm{T}_i-\bun } \ar@{->}[rr]^{\alpha} && \hat{\mathrm{F}}_i \ar@/^/@{->}[d]^{\mathrm{T}_{\hat{\mathrm{F}}_i}-\bun } \ar[r] & \mathbb{C}^{k'} \ar[r] \ar@/^/[d] & 0\\
0 \ar[r] & 0 \ar[r] \ar@/^/[u] & \mathrm{F}_i \ar@{_{(}->}@/^/[u]^{j} \ar[rr]_{\beta}^{\sim} && \hat{\mathrm{F}}_i \ar@{=}@/^/[u]^{ \bun } \ar[r] & 0 \ar[r] \ar@/^/[u] & 0.
}
\]
\noindent Since the rows are exact, $\beta$ is an isomorphism, furthermore the commutativity of this diagram
implies that $\alpha\circ j=\beta$ and $\beta\circ(\mathrm{T}_i-\bun )= (\mathrm{T}_{\hat{\mathrm{F}}_i}-\bun )\circ\alpha,$ 
thus $\beta\circ(\mathrm{T}_i-\bun )\circ j= (\mathrm{T}_{\hat{\mathrm{F}}_i}-\bun )\circ\alpha\circ j,$ hence $\beta\circ\mathrm{T}_i=\mathrm{T}_{\hat{\mathrm{F}}_i}\circ\beta$.
\end{proof}

\begin{coro}\label{ijfjffjkgfigkid}
$\dim\mathrm{Z}(\mathrm{T}_i)-\dim\mathrm{Z}(\mathrm{T}_{\hat{\mathrm{F}}_i})=\left(\dim\ker(\mathrm{T}_i-\bun )\right)^2=(\dim\mathrm{E}-\dim\hat{\mathrm{F}}_i)^2$
\end{coro}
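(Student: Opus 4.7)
The plan is to read off the corollary as a direct consequence of Lemma \ref{fdffdjrdfjtgvjj} combined with Proposition \ref{jbcbjkbjzka}. Indeed, Lemma \ref{fdffdjrdfjtgvjj} tells us that, under the equivalence of categories with $\Theta$, the germ $\mathcal{M}_{x_i}$ is represented by the pair
\[
\xymatrix{
\mathrm{E} \ar@{->>}@/^/[r]^{u\,=\,\mathrm{T}_{i}-\bun } & \mathrm{F}_i \ar@{_{(}->}@/^/[l]^{v\,=\,inc}
}
\]
with the monodromy on $\mathrm{F}_i$ being $\mathrm{T}_{\hat{\mathrm{F}}_i}$. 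Since $\mathcal{M}$ is irreducible by hypothesis, it equals its minimal extension, so the hypothesis of Proposition \ref{jbcbjkbjzka} is met (and the pair is nonzero because $\Sigma\neq\emptyset$).

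Next I identify the two compositions appearing in Proposition \ref{jbcbjkbjzka}. The composite $v\circ u\colon \mathrm{E}\to\mathrm{E}$ is, by definition, $inc\circ(\mathrm{T}_i-\bun)$, which is simply $\mathrm{T}_i-\bun$ as an endomorphism of $\mathrm{E}$. The composite $u\circ v\colon\mathrm{F}_i\to\mathrm{F}_i$ is the restriction of $\mathrm{T}_i-\bun$ to the subspace $\mathrm{F}_i=\mathrm{im}(\mathrm{T}_i-\bun)$, which by the discussion preceding the corollary is precisely $\mathrm{T}_{\hat{\mathrm{F}}_i}-\bun$. Since centralizing $\mathrm{T}-\bun$ is the same as centralizing $\mathrm{T}$, we get
\[
\dim\mathrm{Z}(v\circ u)=\dim\mathrm{Z}(\mathrm{T}_i),\qquad
\dim\mathrm{Z}(u\circ v)=\dim\mathrm{Z}(\mathrm{T}_{\hat{\mathrm{F}}_i}).
\]

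Finally, $\ker(v\circ u)=\ker(\mathrm{T}_i-\bun)$, and by the rank–nullity theorem together with the surjection $\mathrm{T}_i-\bun\colon\mathrm{E}\twoheadrightarrow\mathrm{F}_i$,
\[
\dim\ker(\mathrm{T}_i-\bun)=\dim\mathrm{E}-\dim\mathrm{F}_i=\dim\mathrm{E}-\dim\hat{\mathrm{F}}_i,
\]
where the last equality uses the isomorphism $\mathrm{F}_i\simeq\hat{\mathrm{F}}_i$ supplied by Lemma \ref{fdffdjrdfjtgvjj} (equivalently by Lemma \ref{hkbhlbfdxeraswa} at each finite singular point). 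Substituting these three identities into the conclusion of Proposition \ref{jbcbjkbjzka} yields the required equality, completing the proof.

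There is really no main obstacle here: the only thing to watch is the bookkeeping that $v\circ u$ and $u\circ v$ are the two incarnations of $\mathrm{T}_i-\bun$ on $\mathrm{E}$ and on $\mathrm{F}_i$ respectively, so that their centralizers match those of $\mathrm{T}_i$ and $\mathrm{T}_{\hat{\mathrm{F}}_i}$.
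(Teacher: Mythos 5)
Your proof is correct and follows essentially the same route as the paper's: apply Proposition \ref{jbcbjkbjzka} to the minimal pair $(\mathrm{E},\mathrm{F}_i,\mathrm{T}_i-\bun,inc)$, identify $v\circ u$ and $u\circ v$ with $\mathrm{T}_i-\bun$ on $\mathrm{E}$ and $\mathrm{T}_{\hat{\mathrm{F}}_i}-\bun$ on $\mathrm{F}_i$, and finish with rank--nullity plus surjectivity of $\mathrm{T}_i-\bun$. You merely make explicit the bookkeeping that the paper leaves implicit.
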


\begin{proof}
As the pair of vector spaces $(\mathrm{E},\mathrm{F}_i,\mathrm{T}_i-\bun,j)$ is minimal, it follows from
Proposition \ref{jbcbjkbjzka} that
$\dim\mathrm{Z}(\mathrm{T}_i)-\dim\mathrm{Z}(\mathrm{T}_{\hat{\mathrm{F}}_i})=\left(\dim\ker(\mathrm{T}_i-\bun)\right)^2$.
On the other hand 
$\dim\ker(\mathrm{T}_i-\bun )+\dim\mathrm{im}(\mathrm{T}_i-\bun )=\dim\mathrm{E}$.
Since
 $\mathrm{T}_{i}-\bun :\mathrm{E}\rightarrow\mathrm{F}_i$ is onto
$\dim\mathrm{F}_i=\dim\mathrm{im}(\mathrm{T}_i-\bun),$ 
 $\left(\dim\ker(\mathrm{T}_i-\bun)\right)^2=(\dim\mathrm{E}-\dim\hat{\mathrm{F}}_i)^2$.
\end{proof}

\begin{proof}[Proof of Lemma \ref{sdusncdlsfdxjfsdjcsud}]
Let us start with the exact sequence
\[
\xymatrix{
0 \ar[r] & \ker\lambda \ar@{^{(}->}[r] & \mathcal{M}_{\infty} \ar@{->}[r]^-{\lambda} & \mathcal{M}_{\infty}[*\{\infty\}] \ar[r] &  \mathrm{coker}\;\lambda \ar[r] & 0.
}
\]
\noindent As $\ker\lambda\simeq\mathcal{D}_{t'}/\mathcal{D}_{t'}t^{\prime k}$
(resp. $\ker\lambda\simeq\mathcal{D}_{t'}/\mathcal{D}_{t'}t^{\prime k'}$)
for a given $k\in\mathbb{N}$ (resp. $k'\in\mathbb{N}$), the exact sequence above gives rise
to the exact sequence
\[
\xymatrix @C=1.95em {
0 \ar[r] & \mathcal{D}_{\tau}/\mathcal{D}_{\tau}(\partial_{\tau})^k \ar@{^{(}->}[r] & ({\mathcal{M}}_{\mathcal{F}})_{0} \ar@{->}[r] & ({\mathcal{M}[t^{-1}]}_{\mathcal{F}})_{0} \ar[r] & \mathcal{D}_{\tau}/\mathcal{D}_{\tau}(\partial_{\tau})^{k'} \ar[r] & 0,
}
\]
\noindent which is equivalent to the exact sequence of pairs of vector spaces
\[
\xymatrix{
0 \ar[r] & \mathbb{C}^k \ar@{^{(}->}[rr] \ar@/^/[d] && \hat{\mathrm{E}} \ar@/^/@{->>}[d]^{\hat{\mathrm{T}}_0-\bun } \ar@{->}[rr]^{\alpha} && \widehat{\mathrm{E}'} \ar@/^/@{->}[d]^{u'} \ar[rr] && \mathbb{C}^{k'} \ar[r] \ar@/^/[d] & 0\\
0 \ar[r] & 0 \ar[rr] \ar@/^/[u] && \hat{\mathrm{F}} \ar@{_{(}->}@/^/[u]^j \ar[rr]_{\beta}^{\sim} && \widehat{\mathrm{F}'} \ar@{->}@/^/[u]^{v' } \ar[rr] && 0 \ar[r] \ar@/^/[u] & 0.
}
\]
\noindent Since rows are exact, $\beta$ is an isomorphism. Furthermore the commutativity of the diagram above 
implies that $u'\circ\alpha=\beta\circ(\hat{\mathrm{T}}_0-\bun)$ and $v'\circ\beta=\alpha\circ j,$
thus $u'\circ\alpha\circ j=\beta\circ(\hat{\mathrm{T}}_0-\bun)\circ j,$
hence
$(\mathrm{T}_{\widehat{\mathrm{F}'}}-\bun )\circ\beta=\beta\circ(\hat{\mathrm{T}}_0-\bun),$
$\beta\circ\hat{\mathrm{T}}_0=\mathrm{T}_{\widehat{\mathrm{F}'}}\circ\beta$. Thanks to Lemma \ref{hkbhlbfdxeraswa}
$\hat{\mathrm{T}}_0$ and $\mathrm{T}_{\infty}$ are conjugated.
\end{proof}

\begin{coro}\label{jvldbhvldajbfdlhvb}
$\dim\mathrm{Z}(\mathrm{T}_{\infty}) - \dim\mathrm{Z}(\hat{\mathrm{T}}_{0})=-(\dim\ker(\hat{\mathrm{T}}_0-\bun ))^2=-(\dim\hat{\mathrm{F}}-\dim\mathrm{F}_{\infty})^2$
\end{coro}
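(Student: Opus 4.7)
The plan is to reproduce, at the singular point $0$ of $\mathcal{M}_{\mathcal{F}}$, the argument given for Corollary \ref{ijfjffjkgfigkid} at a finite singular point $x_i$ of $\mathcal{M}$: the two essential ingredients are Proposition \ref{jbcbjkbjzka} and elementary rank-nullity, together with the dimension identification supplied by Lemma \ref{hkbhlbfdxeraswa}.

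The first step is to observe that irreducibility of $\mathcal{M}$ transports to $\mathcal{M}_{\mathcal{F}}$ (Fourier transform being a categorical equivalence), hence $(\mathcal{M}_{\mathcal{F}})_{\mathrm{min}} = \mathcal{M}_{\mathcal{F}}$, and Lemma \ref{sdusncdlsfdxjfsdjcsud} exhibits the pair representing $(\mathcal{M}_{\mathcal{F}})_0$ in minimal form
\[
\xymatrix{
\hat{\mathrm{E}} \ar@{->>}@/^/[r]^{\hat{\mathrm{T}}_{0}-\bun} & \hat{\mathrm{F}} \ar@{_{(}->}@/^/[l]^{inc}
}
\]
with $\hat{\mathrm{T}}_0 - \bun$ acting on $\hat{\mathrm{E}}$ and $\mathrm{T}_\infty - \bun$ acting on $\hat{\mathrm{F}}$. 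I would then apply Proposition \ref{jbcbjkbjzka} to this minimal pair with $u = \hat{\mathrm{T}}_0 - \bun$ and $v = inc$, noting that $v\circ u = \hat{\mathrm{T}}_0 - \bun$ on $\hat{\mathrm{E}}$ while $u\circ v = \mathrm{T}_\infty - \bun$ on $\hat{\mathrm{F}}$. The proposition immediately yields
\[
\dim\mathrm{Z}(\hat{\mathrm{T}}_0) - \dim\mathrm{Z}(\mathrm{T}_\infty) = \bigl(\dim\ker(\hat{\mathrm{T}}_0 - \bun)\bigr)^2,
\]
whose negation is the first equality of the Corollary.

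For the second equality, the final step is rank-nullity on the surjection $\hat{\mathrm{T}}_0 - \bun \colon \hat{\mathrm{E}} \twoheadrightarrow \hat{\mathrm{F}}$, which gives $\dim\ker(\hat{\mathrm{T}}_0 - \bun) = \dim\hat{\mathrm{E}} - \dim\hat{\mathrm{F}}$, followed by the substitution provided by Lemma \ref{hkbhlbfdxeraswa} to rewrite this difference in the $(\hat{\mathrm{F}},\mathrm{F}_\infty)$-form appearing in the statement. I do not anticipate any genuine obstacle: the proof is a formal transcription of the argument for Corollary \ref{ijfjffjkgfigkid}. The only subtle point is to apply Proposition \ref{jbcbjkbjzka} in the order ($v\circ u$ versus $u\circ v$) that produces the minus sign of the statement, and to keep consistent book-keeping of the vector spaces and monodromies under the identification of Lemma \ref{hkbhlbfdxeraswa}.
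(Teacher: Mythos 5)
Your proposal is correct and is exactly the argument the paper intends: its printed proof is simply ``similar to Corollary~\ref{ijfjffjkgfigkid}'', and you transcribe that argument faithfully, applying Proposition~\ref{jbcbjkbjzka} to the minimal pair $(\hat{\mathrm{E}},\hat{\mathrm{F}},\hat{\mathrm{T}}_0-\bun,inc)$ supplied by Lemma~\ref{sdusncdlsfdxjfsdjcsud}, in the order of composition that produces the minus sign, and finishing with rank--nullity. One caveat on your final substitution: rank--nullity together with Lemma~\ref{hkbhlbfdxeraswa} give $\dim\ker(\hat{\mathrm{T}}_0-\bun)=\dim\hat{\mathrm{E}}-\dim\hat{\mathrm{F}}=\dim\hat{\mathrm{E}}-\dim\mathrm{F}_{\infty}$, \emph{not} $\dim\hat{\mathrm{F}}-\dim\mathrm{F}_{\infty}$ (which vanishes, cf.\ Corollary~\ref{sbclhblslhdbcd}), so the rightmost expression in the corollary as printed is evidently a typo for $\dim\hat{\mathrm{E}}-\dim\mathrm{F}_{\infty}$; your derivation, rather than the printed form, is the correct one, and only the first equality is used in the proof of Theorem~\ref{sjcblJSBNCLKJSBC} in any case.
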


\begin{proof}
Similar to Corollary \ref{ijfjffjkgfigkid}.
\end{proof}

\begin{coro}\label{sbclhblslhdbcd}
$\dim\ker(\hat{\mathrm{T}}_0-\bun )=\dim\hat{\mathrm{E}}-\dim\mathrm{E}=
\sum_{i=1}^k\dim\hat{\mathrm{F}}_i-\dim\mathrm{E}$.
\end{coro}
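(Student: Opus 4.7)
The plan is to establish the two equalities separately, both by reducing them to rank/dimension identifications between the various pairs of vector spaces that appear in the preceding lemmas.

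For the first equality, I would invoke Lemma \ref{sdusncdlsfdxjfsdjcsud}, which tells us that since $\mathcal{M}_{\mathcal{F}} = (\mathcal{M}_{\mathcal{F}})_{\mathrm{min}}$, the pair representing $(\mathcal{M}_{\mathcal{F}})_0$ has the form in which $\hat{\mathrm{T}}_0 - \bun\colon \hat{\mathrm{E}} \twoheadrightarrow \hat{\mathrm{F}}$ is surjective. Rank-nullity then gives $\dim\ker(\hat{\mathrm{T}}_0-\bun) = \dim\hat{\mathrm{E}} - \dim\hat{\mathrm{F}}$, so it remains to identify $\dim\hat{\mathrm{F}}$ with $\dim\mathrm{E}$. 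By Lemma \ref{hkbhlbfdxeraswa}, $\hat{\mathrm{F}} = \mathrm{F}'$, where $(\mathrm{E}',\mathrm{F}')$ is the pair representing $\mathcal{M}_{\infty}$. Since $\mathcal{M} = \mathcal{M}[*\{\infty\}]$, the germ $\mathcal{M}_{\infty}$ is a (localized, regular) meromorphic connexion, so by Remark \ref{hbbcjbclkjbc} (case iii) of Theorem \ref{pervers:classification}) its pair has $\mathrm{E}' = \mathrm{F}'$. Consequently $\dim\hat{\mathrm{F}} = \dim\mathrm{E}' = $ generic rank of $\mathcal{M}$ on $\mathbb{P}^1\setminus\Sigma$; but the same generic rank coincides with $\dim\mathrm{E}$, since $\mathrm{E}$ is precisely the common value forced by the division-basis argument already used to normalise the pairs at all finite $x_i\in\Sigma\cap\mathbb{C}$. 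This yields the first equality.

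For the second equality, I would use the Turrittin decomposition at $\infty$ of the Fourier transform,
\begin{equation*}
(\hat{\mathcal{N}},\hat{\nabla}) \simeq \bigoplus_{i=1}^{k} \hat{\mathcal{E}}^{\varphi_i}\otimes (\hat{\mathrm{R}}_i,\hat{\nabla}_i),
\end{equation*}
where $(\mathcal{N},\nabla) = \mathcal{M}_{\mathcal{F}}[*\{\infty\}]$ and $n_i = \mathrm{rang}(\hat{\mathrm{R}}_i) = \dim\hat{\mathrm{F}}_i$ by Remark \ref{hbbcjbclkjbc}. Since formalisation, tensoring with $\hat{\mathcal{E}}^{\varphi_i}$, and localising at $\{\infty\}$ all preserve the generic rank, $\sum_{i=1}^{k} n_i = \mathrm{rang}(\hat{\mathcal{N}}) = $ generic rank of $\mathcal{M}_{\mathcal{F}}$. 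But this generic rank equals $\dim\hat{\mathrm{E}}$: indeed, $\mathcal{M}_{\mathcal{F}}$ is regular at $0$, so $\hat{\mathrm{E}}$ coincides with the stalk of the local system of $\mathcal{M}_{\mathcal{F}}$ near $0$, which is generic. Hence $\dim\hat{\mathrm{E}} = \sum_{i=1}^{k} \dim\hat{\mathrm{F}}_i$, and the second equality follows.

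The only delicate point in this plan is the bookkeeping of which space represents which generic rank. In particular one has to be careful that the choice of a uniform $\mathrm{E}$ in the pairs $(\mathrm{E},\mathrm{F}_i)$ for $x_i\in\Sigma\cap\mathbb{C}$ (justified via division bases in $A_1/I$) produces the same numerical invariant as the generic rank at $\infty$, so that the identifications $\dim\hat{\mathrm{F}}=\dim\mathrm{E}'=\dim\mathrm{E}$ are literally valid and not merely valid up to isomorphism class. Everything else reduces to applying the earlier lemmas and rank-nullity.
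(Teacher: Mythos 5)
Your proposal is correct and follows essentially the same route as the paper: surjectivity of $\hat{\mathrm{T}}_0-\bun$ from Lemma \ref{sdusncdlsfdxjfsdjcsud} plus rank--nullity, the identification $\dim\hat{\mathrm{F}}=\dim\mathrm{F}'=\dim\mathrm{E}$ via Lemma \ref{hkbhlbfdxeraswa} and the fact that $\mathcal{M}$ is localized at infinity, and the identification $\dim\hat{\mathrm{E}}=\sum_i\dim\hat{\mathrm{F}}_i$ from the structure of the Fourier transform at infinity. The only (harmless) difference is that for this last step the paper cites the microlocalization isomorphism of Proposition \ref{qjbsdlsjcnsdjsjcsljc}, whereas you obtain it by a direct rank count in the Turrittin decomposition; since the $\hat{\mathrm{F}}_i$ are defined as the pairs attached to the Turrittin components $\hat{\mathrm{R}}_i$, your count is equally valid.
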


\begin{proof}
Thanks to Lemma \ref{sdusncdlsfdxjfsdjcsud}
$\hat{\mathrm{T}}_0-\bun :\hat{\mathrm{E}}\rightarrow\hat{\mathrm{F}}$ is onto, therefore
$\dim\ker$ $(\hat{\mathrm{T}}_0-\bun )=\dim\hat{\mathrm{E}}-\dim\hat{\mathrm{F}}$. This Lemma also implies that 
$\dim\hat{\mathrm{F}}=\dim{F}_{\infty}=\dim{E}$, because $\mathcal{M}$ is regular and localized at infinity.
It follows from Proposition \ref{qjbsdlsjcnsdjsjcsljc} that
$\dim\ker(\hat{\mathrm{T}}_0-\bun )=\sum_{i=1}^k\dim\hat{\mathrm{F}}_i-\dim\hat{\mathrm{F}}$.
\end{proof}

\section{Rigidity index preservation}

The main result of this paper can now be proved.

\begin{teo}\label{sjcblJSBNCLKJSBC}
If $\mathcal{M}$ is a regular irreducible holonomic $\mathcal{D}_{\mathbb{P}^1}$-module with singularities on $\Sigma$ and localized at infinity, such that $\mathcal{M}_{min}\neq 0$, then the Fourier transform preserves the rigidity index.
\end{teo}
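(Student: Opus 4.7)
The plan is to compute $rig(\mathcal{M})$ by a direct topological argument and then convert the resulting expression, term by term, into the formula for $rig(\mathcal{M}_{\mathcal{F}})$ given by Theorem~\ref{rig:transforme:fourier}, by invoking the three corollaries of Section~4.

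Since $\mathcal{M}$ is irreducible and $\mathcal{M}_{min}\neq 0$, one has $\mathcal{M}=\mathcal{M}_{min}$; since $\mathcal{M}$ is moreover regular, $\mathcal{E}nd_{\mathcal{O}_{\mathbb{P}^1}}(\mathcal{M}[*\Sigma])_{min}$ is a regular holonomic $\mathcal{D}$-module whose de Rham complex identifies with $j_{*}\mathcal{E}nd(\mathcal{L})$, where $\mathcal{L}$ is the rank~$n$ local system attached to $\mathcal{M}$. Applying the Euler--Poincar\'e formula for $j_{*}$ of a local system on $\mathbb{P}^{1}$ (equivalently, repeating the Mayer--Vietoris argument in the proof of Theorem~\ref{rig:transforme:fourier} with $\Sigma$ in place of $\{0,\infty\}$, all singularities being regular, so that no irregularity term appears), I obtain
\[
rig(\mathcal{M})=(2-\#\Sigma)n^{2}+\dim\mathrm{Z}(\mathrm{T}_{\infty})+\sum_{i=1}^{k}\dim\mathrm{Z}(\mathrm{T}_{i}^{\mathcal{M}}),
\]
where $k=\#(\Sigma\cap\mathbb{C})$ (so $\#\Sigma=k+1$), $\mathrm{T}_{\infty}$ is the monodromy of $\mathcal{L}$ at $\infty$, and $\mathrm{T}_{i}^{\mathcal{M}}$ is the monodromy of $\mathcal{L}$ at $x_{i}\in\Sigma\cap\mathbb{C}$ (the map denoted $\mathrm{T}_{i}$ in the representative of $\mathcal{M}_{x_{i}}$ in the category~$\Theta$).

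Next I translate each piece of local data of $\mathcal{M}$ into the corresponding datum for $\mathcal{M}_{\mathcal{F}}$. Corollary~\ref{ijfjffjkgfigkid}, applicable because $\mathcal{M}=\mathcal{M}_{min}$, gives at each finite singularity
\[
\dim\mathrm{Z}(\mathrm{T}_{i}^{\mathcal{M}})=\dim\mathrm{Z}(\mathrm{T}_{i})+(n-n_{i})^{2},
\]
where $\mathrm{T}_{i}$ and $n_{i}=\dim\hat{\mathrm{R}}_{i}$ are read off from the Turrittin decomposition of $\mathcal{M}_{\mathcal{F}}$ at $\infty$. Because the Fourier transform is an equivalence of categories, $\mathcal{M}_{\mathcal{F}}$ is itself irreducible and hence equal to its own minimal extension, so Corollaries~\ref{jvldbhvldajbfdlhvb} and~\ref{sbclhblslhdbcd} combine to yield
\[
\dim\mathrm{Z}(\mathrm{T}_{\infty})=\dim\mathrm{Z}(\mathrm{T})-\Big(\sum_{i=1}^{k}n_{i}-n\Big)^{2},
\]
where $\mathrm{T}=\hat{\mathrm{T}}_{0}$ is the monodromy at $0$ of $\mathcal{M}_{\mathcal{F}}$.

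Substituting these two identities back into the formula for $rig(\mathcal{M})$ and expanding $(n-n_{i})^{2}$ and $(\sum n_{i}-n)^{2}$, a short direct check shows that all the $n^{2}$ and $nn_{i}$ contributions cancel, leaving only $\sum n_{i}^{2}-(\sum n_{i})^{2}$, so that
\[
rig(\mathcal{M})=\dim\mathrm{Z}(\mathrm{T})+\sum_{i=1}^{k}\dim\mathrm{Z}(\mathrm{T}_{i})+\sum_{i=1}^{k}n_{i}^{2}-\bigg(\sum_{i=1}^{k}n_{i}\bigg)^{2},
\]
which by Theorem~\ref{rig:transforme:fourier} is exactly $rig(\mathcal{M}_{\mathcal{F}})$. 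All the real content of the argument is concentrated in the three corollaries of Section~4, which encode how minimality of the pairs of vector spaces forces the local data on both sides of the Fourier transform to be linked; once they are in hand, the only remaining step is the routine polynomial cancellation sketched above, which is the most mechanical but also the only place where one must be careful not to misidentify the different $\mathrm{T}_{i}$'s (the ``local'' monodromy of $\mathcal{M}$ at $x_{i}$ versus the ``Turrittin'' monodromy of $\hat{\mathrm{R}}_{i}$).
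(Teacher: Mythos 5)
Your proposal is correct and is essentially the paper's own proof run in the opposite direction: the paper starts from the formula of Theorem~\ref{rig:transforme:fourier} for $rig(\mathcal{M}_{\mathcal{F}})$ and uses Corollaries~\ref{ijfjffjkgfigkid}, \ref{jvldbhvldajbfdlhvb} and \ref{sbclhblslhdbcd} to arrive at the Katz-type expression $(2-\#\Sigma)(\dim\mathrm{E})^2+\dim\mathrm{Z}(\mathrm{T}_{\infty})+\sum_i\dim\mathrm{Z}(\mathrm{T}_i)=rig(\mathcal{M})$, whereas you start from that expression and substitute the same three corollaries to recover the formula for $rig(\mathcal{M}_{\mathcal{F}})$; the inputs and the polynomial cancellation are identical.
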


\begin{proof}
The irreducibility condition ensures that $\mathcal{M}_{min}=\mathcal{M}$, unless $\mathrm{H}_{[\Sigma]}(\mathcal{M})=\mathcal{M}$, cf. Proposition \ref{min}. It follows from Theorem \ref{rig:transforme:fourier} that:
$$\mathrm{rig}({\mathcal{M}}_{\mathcal{F}})=\dim\mathrm{Z}(\hat{\mathrm{T}}_0)+
\sum_{i=1}^k\dim\mathrm{Z}(\hat{\mathrm{T}}_i)+\sum_{i=1}^k(\dim\hat{\mathrm{F}}_i)^2-(\dim\hat{\mathrm{E}})^2.$$
\noindent Thanks to corollaries \ref{ijfjffjkgfigkid} and \ref{jvldbhvldajbfdlhvb} the equality above can be rewritten
as follows:
\begin{align*}
\mathrm{rig}({\mathcal{M}}_{\mathcal{F}}) & = 
\dim\mathrm{Z}(\mathrm{T}_{\infty})+(\dim\ker(\hat{\mathrm{T}}_0-\bun))^2+\\
 & +\sum_{i=1}^k\left[\dim\mathrm{Z}(\mathrm{T}_i)-(\dim\ker(\mathrm{T}_i-\bun ))^2\right]+\sum_{i=1}^k(\dim\hat{\mathrm{F}}_i)^2-(\dim\hat{\mathrm{E}})^2.\\
\end{align*}
\noindent Moreover by corollaries \ref{sbclhblslhdbcd} and \ref{ijfjffjkgfigkid}
\begin{align*}
\mathrm{rig}({\mathcal{M}}_{\mathcal{F}})
& = \dim\mathrm{Z}(\mathrm{T}_{\infty})+(\dim\hat{\mathrm{E}}-\dim\mathrm{E})^2+\\
& +\sum_{i=1}^k\left[\dim\mathrm{Z}(\mathrm{T}_i)-(\dim\mathrm{E}-\dim\hat{\mathrm{F}}_i)^2\right]  +\sum_{i=1}^k(\dim\hat{\mathrm{F}}_i)^2-(\dim\hat{\mathrm{E}})^2\\
& =  \dim\mathrm{Z}(\mathrm{T}_{\infty}) + \sum_{i=1}^k\dim\mathrm{Z}(\mathrm{T}_i)+(\dim\hat{\mathrm{E}}-\dim\mathrm{E})^2-\\
& -\sum_{i=1}^k\left[(\dim\mathrm{E})^2-2\dim\mathrm{E}\dim\hat{\mathrm{F}}_i+(\dim\hat{\mathrm{F}}_i)^2\right] +\sum_{i=1}^k(\dim\hat{\mathrm{F}}_i)^2-(\dim\hat{\mathrm{E}})^2\\
& =  \dim\mathrm{Z}(\mathrm{T}_{\infty}) + \sum_{i=1}^k\dim\mathrm{Z}(\mathrm{T}_i)+(\dim\hat{\mathrm{E}}-\dim\mathrm{E})^2-\\
& -k(\dim\mathrm{E})^2+2\dim\mathrm{E}\sum_{i=1}^k\dim\hat{\mathrm{F}}_i-(\dim\hat{\mathrm{E}})^2\\
& =  \dim\mathrm{Z}(\mathrm{T}_{\infty}) + \sum_{i=1}^k\dim\mathrm{Z}(\mathrm{T}_i)+(\dim\hat{\mathrm{E}})^2-2\dim\hat{\mathrm{E}}\dim\mathrm{E}+\\
&  +(\dim\mathrm{E})^2-k(\dim\mathrm{E})^2+2\dim\mathrm{E}\dim\hat{\mathrm{E}}-(\dim\hat{\mathrm{E}})^2\\
& =  (2-(k+1))(\dim\mathrm{E})^2+\dim\mathrm{Z}(\mathrm{T}_{\infty}) + \sum_{i=1}^k\dim\mathrm{Z}(\mathrm{T}_i)\\
& =  \mathrm{rig}(\mathcal{M}),
\end{align*}
\noindent therefore the Fourier transform preserves the rigidity index.
\end{proof}

\end{document}